\newtheorem{theorem}{Theorem}[section]
\newtheorem{lemma}[theorem]{Lemma}
\newtheorem{corollary}[theorem]{Corollary}
\newtheorem{proposition}[theorem]{Proposition}
\def \endprf{\hfill {\vrule height6pt width6pt depth0pt}\medskip}
\newenvironment{proof}{\noindent {\bf Proof} }{\endprf\par}
\numberwithin{equation}{section}
\newcommand{\C}{\mathbb{C}}
\newcommand{\D}{\mathbb{D}}
\newcommand{\bbS}{\mathbb{S}}
\newcommand{\cA}{\mathcal{A}}
\newcommand{\cH}{\mathcal{H}}
\newcommand{\cL}{\mathcal{L}}
\newcommand{\R}{\mathbb{R}}
\newcommand{\E}{\operatorname{\mathbb{E}}}
\newcommand{\trace}{\operatorname{trace}}
\newcommand{\minimize}{\mbox{minimize}}
\newcommand{\st}{\mbox{subject to}}
\newcommand{\eq}[1]{(\ref{eq:#1})}
\title{Linear System Identification via Atomic Norm Regularization}
\author{Parikshit Shah, Badri Narayan Bhaskar, Gongguo Tang and Benjamin Recht\\
University of Wisconsin-Madison}
\date{}
\begin{document}

\maketitle

\vspace{-0.3in}

\bibliographystyle{plain}
\begin{abstract}
This paper proposes a new algorithm for linear system identification from noisy measurements.  The proposed algorithm balances a data fidelity term with a norm induced by the set of single pole filters. We pose a convex optimization problem that approximately solves the atomic norm minimization problem and identifies the unknown system from noisy linear measurements.  This problem can be solved efficiently with standard, freely available software.  We provide rigorous statistical guarantees that explicitly bound the estimation error (in the $\cH_2$-norm) in terms of the stability radius, the Hankel singular values of the true system and the number of measurements. These results in turn yield complexity bounds and asymptotic consistency. We provide numerical experiments demonstrating the efficacy of our method for estimating linear systems from a variety of linear measurements.
\end{abstract}

{\bf Keywords} System identification. Atomic norms.  Hankel operators. Optimization.

\section{Introduction}\label{sec:intro} \noindent

Identifying dynamical systems from noisy observation of their input-output behavior is of fundamental importance in systems and control theory. Often times models derived from physical first principles are not available to the control engineering, and computing a surrogate model from data is essential to the design of a control system.  System identification from data is thus ubiquitous in problem domains ranging from process engineering, dynamic modeling of mechanical and aerospace systems, and systems biology. Though there are a myriad of approaches and excellent texts on the subject (see, for example~\cite{LjungBook}), there is still no universally agreed upon approach for this problem.  One reason is that quantifying the interplay between system parameters, measurement noise, and model mismatch tends to be challenging.

This paper draws novel connections between contemporary high-dimensional statistics, operator theory, and linear systems theory to prove consistent estimators of linear systems from small measurement sets.  In particular, building on recent studies of \emph{atomic norms} in estimation theory~\cite{CRPW10,BhaskarAllerton11}, we propose a penalty function which encourages estimated models to have small McMillan degree.

A related family of system identification techniques use finite sample Hankel matrices to estimate dynamical system models, using either singular value decompositions (e.g,~\cite{Verhaegen92,Overschee94}) or semidefinite programming~\cite{Fazel01,Liu08,Smith12,Fazel11}. In all of these techniques, no statistical guarantees were given about the quality of estimation with finite noisy data, and it was difficult to determine how sensitive these methods were to the hidden system parameters or measurement noise.  Moreover, since these problems were dealing with finite, truncated Hankel matrices, it is never certain if the size of the Hankel matrix is sufficient to reveal the true McMillan degree.  Moreover, the techniques based on semidefinite programming are challenging to scale to very large problems, as their complexity grows superlinearly with the number of measurements.

In contrast, the atomic norm regularizer proposed in this paper is not only equivalent to the sum of the Hankel singular values (the Hankel nuclear norm), but is also well approximated by a finite dimensional, $\ell_1$ minimization problem. We  show that solving least-squares problems regularized by our atomic norm is consistent, and scales gracefully with the stability radius, the McMillan degree of the system to be identified, and the number of measurements.  Our numerical experiments validate these theoretical underpinnings, and show that our method has great promise to provide concrete estimates on the hard limits of estimating linear systems.


\subsection{Notation}\label{sec:notation}
We adopt standard notation; $\D$ and $\bbS$ will denote respectively the open unit ball and the unit
circle in the complex plane $\C$ . $\cH_2$ and $\cH_\infty$ will denote the Hardy spaces of functions analytic outside $\D$, with the norms
\[\|f\|_{\cH_2} = \tfrac{1}{2\pi} \int_0^{2\pi} |f(e^{i\theta})|^2 d\theta~~~~~\mbox{and}~~~~~\|f\|_{\cH_\infty} = \sup_{z\in\bbS} |f(z)|\]
respectively.  $\ell_2([a,b])$ will denote the set of square summable sequences on the integers in $[a, b]$.

\section{Atomic Decompositions of Transfer Functions}\label{sec:atomic-def}
We restrict our attention to SISO systems in this manuscript, as this will simplify the presentation.  However, we will describe in the discussion how to extend our techniques to MIMO systems.  Suppose we wish to estimate a SISO, LTI system with transfer function $G_\star(z)$ from a finite collection of measurements $y=\Phi(G_\star)$.  The set of all transfer functions is an infinite dimensional space, so reconstructing $G_\star$ from this data is ill-posed.  In order to make it well posed, a common regularization approach constructs a penalty function $\operatorname{pen}(\cdot)$ that encourages ``low-complexity'' models and solves the optimization problem
\begin{equation}\label{eq:regularized}
	\minimize_G~\|\Phi(G)-y\|_2^2 + \mu \operatorname{pen}(G) \,.
\end{equation}
This formulation uses the parameter $\mu$ to balance between model complexity and fidelity to the data.  The least-squares cost can be modified to other convex loss functions if knowledge about measurement noise is available (as in~\cite{Smith12,Paganini96}), though in general it is less clear how to design a good penalty function.

In many applications, we know that the true model can be decomposed as a linear combination of very simple building blocks.  For instance, sparse vectors can be written as short linear combinations of vectors from some discrete dictionary and low-rank matrices can be written as a sum of a few rank-one factors.  In~\cite{CRPW10}, Chandraskearan et al. proposed a universal heuristic for constructing regularizers based on such prior information.  If we assumed that 
\[
	G_\star = \sum_{i=1}^r c_i a_i\,,~\mbox{for some}~a_i\in\cA,c_i\in \C\,,
\]
where $\cA$ is an origin-symmetric set of ``atoms'' normalized to have unit norm and $r$ is relatively small, then the appropriate penalty function is the guage function (or the Minkowski functional) induced by the atomic set $\cA$:
\begin{equation}\label{eq:atomic-norm-def}
\begin{split}
	\|G\|_{\cA}: & = \inf\left\{ t \; : \; G \in t\text{ conv}(\cA)  \right\}  =\inf\left\{ \sum_{a\in \cA} |c_a|~:~G = \sum_{a\in \cA} c_a a\right\}\,.
\end{split}
\end{equation}
In~\cite{CRPW10}, it is shown that minimizing the atomic norm subject to compressed measurements yielded the tightest known bounds for recovering many classes of models from linear measurements. Moreover, in~\cite{BhaskarAllerton11}, the atomic norm regularizer was studied in the context of denoising problems and was found to produce consistent estimates at nearly optimal estimation error rates for many classes of atoms.

To apply these atomic norm techniques to system identification, we must first determine the appropriate set of atoms.  For discrete time LTI systems with small McMillan degree, we can always decompose any finite dimensional, strictly proper system $G(z)$ as:
$$
G(z)=\sum_{i=1}^{s} \frac{c_i}{z-a_i}\,.
$$
via a partial fraction expansion.  Hence, it makes sense that our set of atoms should be single-pole transfer functions.  We propose the following atomic set for linear systems
\[
\cA = \left\{ \varphi_{w}(z) = \frac{1-|w|^2}{z-w} ~:~w\in \D\right\}\,.
\]
The numerator is normalized so that the Hankel norm of each atom is $1$.  See the discussion in Section~\ref{sec:hankel} for precisely why this normalization is desirable.

The atomic norm penalty function associated with these atoms is 
\begin{equation}\label{eq:atomic-def}
	\| G(z) \|_{\cA} = \inf \left\{ \sum_{w\in \D} |c_w| ~:~ G(z) = \sum_{w\in\D} \frac{c_w (1-|w|^2)}{z-w}\right\}\,,
\end{equation}
where the summation implies that only a countable number of terms have nonzero coefficients $c_w$.  This expression finds the decomposition of $G(z)$ into a linear combination of single pole systems such that the $\ell_1$ norm, weighted by the norms of the single poles, is as small as possible.

With this penalty function in hand, we now turn to analyzing its utility.  In Section~\ref{sec:hankel}, we first show that for most systems of interest $\|G\|_{\cA}$ is a well-defined, bounded quantity.  Moreover, we will show that the atomic norm is equivalent to the nuclear norm of the Hankel operator associated with  $G$.  Hence, the models that are preferred by our penalty function will have low-rank Hankel operators, and thus low McMillan degrees.

In Section~\ref{sec:computation}, we turn to computation, demonstrating practical algorithms for approximating atomic norm regularization problems for several classes of measurements.  We will show that with finite data, our atomic norm minimization problem is well-approximated by a finite-dimensional $\ell_1$ norm regularization problem.  In particular, using specialized algorithms adapted to the solution of the LASSO~\cite{Wright09}, we can solve atomic norm regularization problems in time competitive with respect to techniques that regularize with the nuclear norm and SVD-based subspace identification methods.

Finally, we analyze the statistical performance of atomic norm minimization in Section~\ref{sec:statistics}.  We show that our algorithm is asymptotically consistent over several measurement ensembles of interest.  We focus on sampling the transfer function on the unit circle and present $\cH_2$ error bounds in terms of the stability radius, Hankel singular values, $\cH_{\infty}$ norm, and McMillan degree of the system to be estimated.

\section{The Hankel Nuclear Norm and Atomic Norm Minimization}\label{sec:hankel}

Let us first show that most LTI systems of interest do indeed have finite atomic norm, and, moreover, that the atomic norm is closely connected with the sum of the Hankel singular values.

\subsection{Preliminaries: the Hankel operator}\label{sec:hankel-defs}
Recall that the \emph{Hankel operator}, $\Gamma_G$, of the transfer function $G$ is defined as the mapping from the past to the future under the transfer function $G$.  Given a signal $u$ supported on $(-\infty,-1]$, the output under $G$ is given by $g * u$ where ``$*$'' denotes convolution and $g$ is the impulse response of $G$: 
\[
	G(z) = \sum_{k=1}^\infty g_k z^{-k}\,.
\]
$\Gamma_G$ is then simply the projection of $g*u$ onto $[0,\infty)$.  An introduction to Hankel operators in control theory can be found in~\cite[Chapter 4]{DullerudPaganiniBook} or~\cite[Chapter 7]{Zhou95}.

The \emph{Hankel norm} of $G$ is the operator norm of $\Gamma_G$ considered as an operator mapping $\ell_2(-\infty,-1]$ to $\ell_2[0,\infty)$. The \emph{Hankel nuclear norm} of $G$ is the nuclear norm (aka the trace norm or Schatten $1$-norm) of $\Gamma_G$.  To be precise, an operator $T$ is in the \emph{trace class} $S_1$ if the trace of $(T^*T)^{1/2}$ is finite.  This implies first that $T$ is a compact operator and admits a singular value decomposition
\begin{equation*}
    T(f) = \sum_{i=1}^\infty \sigma_i \langle v_i, f \rangle u_i \,.
\end{equation*}
The sequence $\sigma_i$ are called the \emph{Hankel singular values} of $T$.  Moreover, the Schatten $1$-norm of $T$ is given by
\begin{equation*}
    \|T\|_1 = \trace\left((T^*T)^{1/2}\right) =
    \sum_{i=1}^\infty \sigma_i\,.
\end{equation*}

\subsection{The atomic norm is equivalent to the Hankel nuclear norm}
The rank of the Hankel operator determines the McMillan degree
of the linear system defined by $G$.  Rank minimization is notoriously computationally challenging (see~\cite{Recht10} for a discussion), and we don't expect to be able to directly penalize the norm of the Hankel operator in implementations.  Thus, as is common, a reasonable heuristic for minimizing the rank of the Hankel operator would be to minimize the sum of the Hankel singular values, i.e., to minimize the Schatten $1$-norm of the Hankel operator.  For rational transfer functions, we can compute the Hankel nuclear norm via a balanced realization~\cite{Zhou95}.  On the other hand, while the maximal Hankel singular value can be written variationally as an LMI, we are not aware of any such semidefinite programming formulations for the Hankel nuclear norm.  

The following theorem provides a path towards minimizing the Hankel nuclear norm, minimizing the atomic norm $\|G(z)\|_{\cA}$ as a proxy.  Indeed, from the view of Banach space theory, the atomic norm is~\emph{equivalent} to the Hankel nuclear norm.

\begin{theorem}\label{thm:hankel-nuclearity}
Let $G \in \cH_2$.  Then $\Gamma_G$ is trace class if and only if there exists a sequence $\{\lambda_k\} \in \ell_1$ and a
    sequence $\{w_k\}$ with $w_k \in \D$ such that
\begin{equation}\label{eq:kernel-form}
    g(z) = \sum_{i=1}^\infty \lambda_k \frac{1-|w_k|^2}{z-w_k}\,.
    \end{equation}
Moreover, we have the following chain of inequalities
\begin{equation}\label{eq:norm-ineq-chain}
\tfrac{\pi}{8}\|G\|_{\cA} \leq  \|\Gamma_G\|_1 \leq
\|G\|_{\cA} 
\end{equation}
where  $\|G\|_{\cA}$ is given by~\eq{atomic-norm-def}.
\end{theorem}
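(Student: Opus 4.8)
The plan is to work directly with the singular value decomposition of the Hankel operator and to exploit the classical fact that the Hankel operator of the atom $\varphi_w(z) = (1-|w|^2)/(z-w)$ is a rank-one operator of unit Hankel norm, whose singular vectors are (normalized) geometric sequences. First I would compute, for a single atom, the impulse response $\varphi_w(z) = \sum_{k\ge 1}(1-|w|^2)w^{k-1}z^{-k}$, so that $\Gamma_{\varphi_w}$ has Hankel matrix $(1-|w|^2)w^{i+j}$ in the standard basis; one checks this is the outer product of the vector $\sqrt{1-|w|^2}\,(w^k)_{k\ge 0}$ with itself, hence $\|\Gamma_{\varphi_w}\|_1 = 1$. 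This immediately gives the right-hand inequality in~\eq{norm-ineq-chain}: if $g(z) = \sum_k \lambda_k \varphi_{w_k}(z)$ with $\{\lambda_k\}\in\ell_1$, then by triangle inequality for the nuclear norm $\|\Gamma_G\|_1 \le \sum_k |\lambda_k|\,\|\Gamma_{\varphi_{w_k}}\|_1 = \sum_k|\lambda_k|$, and taking the infimum over all such representations yields $\|\Gamma_G\|_1 \le \|G\|_{\cA}$. This also proves the ``if'' direction of the iff: any $g$ of the form~\eq{kernel-form} has trace-class Hankel operator.

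The substance is the converse: from $\Gamma_G$ trace class, produce an atomic decomposition~\eq{kernel-form} with $\ell_1$-control on the coefficients, i.e.\ the left-hand inequality $\tfrac{\pi}{8}\|G\|_{\cA}\le\|\Gamma_G\|_1$. The natural route is an integral (continuous) atomic decomposition followed by discretization. I would use the fact that the rank-one Hankel operators $\Gamma_{\varphi_w}$ for $w$ ranging over $\D$ form a total family, and seek to write $g$ as an integral $g(z) = \int_{\D} \varphi_w(z)\,d\nu(w)$ against a complex measure $\nu$ with $\|\nu\|_{TV}$ comparable to $\|\Gamma_G\|_1$; a countable atomic decomposition then follows by approximating $\nu$ by atomic measures (or one simply notes that the infimum defining $\|G\|_\cA$ is unchanged whether one allows countable sums or general measures, since extreme points of the unit ball are atoms). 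Concretely, I expect this to go through the reproducing-kernel / Fourier picture: writing $f = e^{i\theta}$ and expanding the atom's boundary values as a Poisson-type kernel, the claim $\|G\|_{\cA}\le \tfrac{8}{\pi}\|\Gamma_G\|_1$ should reduce to a statement that the Hankel matrix $(g_{i+j})$, viewed as a bilinear form, has nuclear norm controlling the total variation of the spectral measure appearing in its diagonalization, with $\tfrac{8}{\pi}$ (equivalently $\tfrac{4}{\pi}$ squared, up to a factor) emerging as the $L^1$-norm of the relevant Poisson/Fejér-type kernel or from the constant in an $\ell_1$ Fourier-coefficient estimate. One clean way to get a fixed constant: diagonalize $\Gamma_G = \sum_i \sigma_i\langle v_i,\cdot\rangle u_i$, use symmetry of the Hankel operator to relate $u_i$ and $v_i$, interpret each $v_i$ (a square-summable sequence) as the coefficient sequence of an $\cH_2$ function on the circle, and decompose that $\cH_2$ function into atoms $\varphi_w$ with an $\ell_1$ bound paying only an absolute constant; summing over $i$ against the weights $\sigma_i$ gives $\|G\|_\cA \le C\sum_i\sigma_i = C\|\Gamma_G\|_1$.

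The main obstacle is pinning down the constant $\tfrac{\pi}{8}$ while simultaneously ensuring the decomposition uses only poles \emph{inside} $\D$ (not on the boundary) and has genuinely summable coefficients — i.e.\ controlling the ``tail'' of the discretization of a continuous superposition without losing more than a constant factor. I anticipate the key technical lemma will be a quantitative bound of the form: any $h\in\cH_2$ with $\|\Gamma_h\|_1<\infty$ admits $h = \sum_k c_k\varphi_{w_k}$ with $\sum_k|c_k| \le \tfrac{8}{\pi}\|\Gamma_h\|_1$, proved by a kernel computation; once the atom normalization (Hankel norm $=1$) is in place, the chain~\eq{norm-ineq-chain} follows by linearity and the triangle inequality as above. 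Everything else — the equivalence of the countable-sum and measure formulations, compactness of trace-class operators, the rank-one computation for a single atom — is routine operator theory.
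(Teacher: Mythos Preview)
Your treatment of the upper bound $\|\Gamma_G\|_1 \le \|G\|_{\cA}$ is correct and is exactly what any proof must do: each atom has rank-one Hankel operator of unit trace norm, and the triangle inequality finishes it. This also gives the ``if'' direction of the iff, as you note.

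For the hard direction, however, there is a genuine gap. Your proposed route---take the SVD $\Gamma_G = \sum_i \sigma_i \langle v_i,\cdot\rangle u_i$, reinterpret each singular vector $v_i$ as an $\cH_2$ function, and decompose \emph{that} function atomically with an $\ell_1$ bound---does not close. The singular vectors of a general trace-class Hankel operator are merely unit $\ell_2$ sequences; there is no reason an individual $v_i$ should itself admit an atomic decomposition with uniformly bounded $\ell_1$ mass, and even if it did, the resulting superposition would be a sum of rank-one operators $u_i v_i^*$, not of Hankel operators $\Gamma_{\varphi_w}$, so you would still need to convert each rank-one piece back into an atom. Your alternative suggestion (an integral representation $g = \int_{\D}\varphi_w\,d\nu(w)$ with $\|\nu\|_{TV}$ controlled by $\|\Gamma_G\|_1$) is closer in spirit to the literature, but you have not indicated how to produce such a measure, and the remark that the constant ``should reduce to'' an $L^1$ Poisson/Fej\'er computation is speculation rather than an argument.

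You should be aware that the paper itself does not give a self-contained proof of this theorem: its ``Proof Outline'' simply assembles citations. The equivalence of trace-class Hankel operators with a Besov space is Peller's theorem; the atomic decomposition of that Besov space is due to Coifman and Rochberg and proceeds through reproducing-kernel estimates on the Bergman space rather than through the Hankel SVD; and the specific constants $\tfrac{\pi}{8}$ and $1$ (together with the sharpness of $\tfrac{\pi}{8}$ and the lower bound $\tfrac{1}{2}$ for the other constant) are computed by Bonsall and Walsh. So the honest comparison is: your easy half is complete and matches the standard argument, but your hard half is a sketch that does not yet contain the key mechanism (the Besov/Bergman atomic decomposition), and the paper defers that mechanism entirely to the cited references rather than reproducing it.
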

\emph{Proof Outline}  Theorem~\ref{thm:hankel-nuclearity} follows by carefully combining several different results from operator theory. Peller first showed that transfer functions with trace class Hankel operators formed a \emph{Besov space}~\cite{Peller79}. Peller's argument can be found in his book~\cite{PellerHankelBook}. The atomic decomposition of such operators is due to Coifman and Rochberg~\cite{Coifman80}. The norm bounds~\eq{norm-ineq-chain} were proven by Bonsall and Walsh~\cite{Bonsall86}. There they show that the $\tfrac{\pi}{8}$ is the best possible lower bound.  They also show that if $\|\Gamma_g\|_1\leq C\|g\|_{\cA}$ for all $g$, then $C$ must be at least $\tfrac{1}{2}$, so the chain of inequalities is nearly optimal. A concise presentation of the full argument can be found  in~\cite{PartingtonHankelBook}. A modern perspective using the theory of reproducing kernels can be found in~\cite{ZhuBook}. 
\vspace{1mm}

\noindent Theorem~\ref{thm:hankel-nuclearity} asserts that a transfer function has a finite atomic norm if and only if the sum of its Hankel singular values is finite.  In particular, this means that every rational transfer function has a finite atomic norm.  More importantly, the atomic norm is equivalent to the Hankel nuclear norm.  Thus if we can approximately solve atomic norm-minimization, we can approximately solve Hankel nuclear norm minimization and vice-versa.  We now turn to such computational considerations.

\section{Algorithms for atomic norm minimization}\label{sec:computation}

From here on, let us assume that the $G_\star$ that we seek to estimate has all of its poles of magnitude at most $\rho$  ( we will call $\rho$ the \emph{stability radius}, and treat it as a known parameter).  Let $\D_\rho$ denote the set of all complex numbers with norm at most $\rho$.  Note that if $G_\star$ has stability radius $\rho$ then
\[
	\|G\|_{\cA}:=\inf\left\{ \sum_{w\in \D_\rho} |c_w|~:~G(z) = \sum_{w\in \D_\rho} \frac{c_w (1-|w|^2)}{z-w}\right\}\,.
\]
That is, we can restrict our set of atoms to only be those single pole systems with stability radius equal to $\rho$.  For the remainder of this manuscript, we assume that $\cA$ only consists of such single pole systems.

In what follows, we focus our attention on linear measurement maps.  Let $\cL_i: \cH \mapsto \C $ be a linear functional that serves as a measurement operator for the system $G(z)$.  Many maps of interest can be phrased as linear functionals of the transfer function,
\begin{enumerate}
	\item Samples of the frequency response $\mathcal{L}_k(G):=G(e^{i\theta_k})$ for $k=1,\ldots, n$.  From a control theoretic perspective, this measurement operator corresponds to measuring the gain and phase of the linear system at different frequencies.
	\item Samples of the impulse response, $\mathcal{L}_k(G):=g_{i_k}$ for $k=1,\ldots, n$ and $i_k \in [1,\infty)$.
	\item Convolutions of the impulse response with a pseudorandom signal $u_k$: $\mathcal{L}_k(G):=\sum_{j=1}^{\infty} g_j u_{k-j}$.
\end{enumerate}

In all of these cases, we consider the problem
\begin{equation}\label{eq:atomic-norm-lin-inverse}
	\minimize_{G} \tfrac{1}{2} \sum_{i=1}^n |\cL_i(G) - y_i |^2 + \mu \|G\|_{\cA}\,.
\end{equation}
This problem is equivalent to the constrained, semi-infinite programming problem
\[
	\begin{array}{ll}
	\minimize_{x,G} & \tfrac{1}{2}\sum_{k=1}^n |x_k - y_k |^2 + \mu \sum_{w\in \D_\rho} |c_w|\\
	\st & x_k = \cL_k(G)~~\mbox{for}~i=1,\ldots, n\\
	& G = \sum_{w\in \D_\rho} \frac{c_w(1-|w|^2)}{z-w}
	\end{array}
\]
Eliminating the equality constraint gives yet another equivalent formulation
\begin{equation}\label{eq:atomic-norm-lin-inverse-finite-dim}
	\begin{array}{ll}
	\minimize_{x} & \tfrac{1}{2}\sum_{k=1}^n |x_k - y_k |^2 + \mu \sum_{w\in \D_\rho} |c_w|\\
	\st & x_k = \sum_{w\in \D_\rho} c_w \cL_k\left(\frac{1-|w|^2}{z-w}\right)~~\mbox{for}~i=1,\ldots, n\,.
	\end{array}
\end{equation}
Note that in this final formulation, our decision variable is $x$, a finite dimensional vector, and $c_w$, the coefficients of the atomic decomposition.  The infinite dimensional variable $G$ has been eliminated.  Let us define a norm on $\R^n$ based on the formulation~\eq{atomic-norm-lin-inverse-finite-dim}
\[
	\|x\|_{\cL(\cA)} = \inf \left\{ \sum_{w\in \D_\rho} |c_w| ~:~ x_i = \sum_{w\in\D_\rho} c_w\mathcal{L}_i\left(\frac{1-|w|^2}{z-w} \right)\right\}\,.
\]
Then we see that problem~\eq{atomic-norm-lin-inverse} is equivalent to the denoising problem
\begin{equation}\label{eq:finite-dim-ast}
	\minimize_{x} \tfrac{1}{2}\|x-y\|_2^2 + \mu \|x\|_{\cL(\cA)}\,.
\end{equation}
Note that the first term is simply the squared Euclidean distance between $y$ and $x$ in $\R^n$.  The second term is an atomic norm on $\R^n$ induced by the linear map of the set of transfer functions via the measurement operator $\cL$.  In order to tractably solve~\eq{atomic-norm-lin-inverse}, we thus only need focus on computational schemes for computing or approximating $\|x\|_{\cL(\cA)}$.  The following proposition asserts that we can approximate this finite dimensional atomic norm via a sufficiently fine discretization of the unit disk.

\begin{proposition}\label{prop:grid}
Let $\D_\rho^{(\epsilon)}$ be a finite subset of the unit disc such that for any $w \in \D_\rho$ there exists a $v \in \D_\rho^{(\epsilon)}$ satisfying $|w-v| \leq \epsilon$.  Define
\[
	\|x\|_{\cL(\cA_\epsilon)} = \inf\left\{ \sum_{w\in \D_{\rho}^{(\epsilon)}} |c_w| ~:~ x_i = \sum_{w\in\D_{\rho}^{(\epsilon)}} c_w\mathcal{L}_i\left(\frac{1-|w|^2}{z-w} \right)\right\}\,.
\]
Then there exists a constant $C_\epsilon \in [0,1]$ such that
\[
	C_\epsilon \|x\|_{\cL(\cA_{\epsilon})} \leq \|x\|_{\cL(\cA)} \leq \|x\|_{\cL(\cA_{\epsilon})} \,.
\]
\end{proposition}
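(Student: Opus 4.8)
\medskip
\noindent\emph{Proof sketch.}\quad The right-hand inequality $\|x\|_{\cL(\cA)}\le\|x\|_{\cL(\cA_\epsilon)}$ holds with no condition on the grid: since $\D_\rho^{(\epsilon)}\subseteq\D_\rho$, every decomposition of $x$ feasible for the infimum defining $\|x\|_{\cL(\cA_\epsilon)}$ is also feasible for the one defining $\|x\|_{\cL(\cA)}$, so the latter is an infimum over a larger set and is no larger. Thus all the content lies in the left-hand inequality, i.e.\ in controlling how much the atomic norm can increase when only grid atoms are allowed. Throughout, write $a(w):=(\cL_1(\varphi_w),\dots,\cL_n(\varphi_w))\in\C^n$ for the image of the atom $\varphi_w$ under the measurement map, so that $\|\cdot\|_{\cL(\cA)}$ and $\|\cdot\|_{\cL(\cA_\epsilon)}$ are the gauges of $\mathrm{conv}\{c\,a(w):|c|\le 1,\; w\in\D_\rho\}$ and $\mathrm{conv}\{c\,a(v):|c|\le 1,\; v\in\D_\rho^{(\epsilon)}\}$ respectively.

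Two preliminary facts drive the argument. First, $w\mapsto a(w)$ is continuous on the \emph{compact} disc $\overline{\D_\rho}$: because $\rho<1$, each atom $\varphi_w$ stays uniformly bounded away from the boundary singularity (indeed $\|\varphi_w\|_{\cH_\infty}\le 1+\rho$, and its impulse response has $\ell_1$-norm $\le 1+\rho$), and $w\mapsto\varphi_w$ is norm-continuous there, so each bounded functional $\cL_i$ from the list above --- frequency sample, impulse-response sample, or pseudorandom convolution --- produces a continuous coordinate $w\mapsto\cL_i(\varphi_w)$; this is also immediate by inspecting the explicit formulas. Hence $a$ is uniformly continuous with some modulus $\omega$, $\omega(\epsilon)\to 0$, and its balanced envelope $\{c\,a(w):|c|\le 1,\; w\in\overline{\D_\rho}\}$ is compact, so by Carath\'eodory's theorem in finite dimensions $\mathrm{conv}\{c\,a(w):|c|\le 1\}$ is a compact, convex, origin-symmetric set. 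Second, this set contains $0$ in the relative interior of its span $V:=\mathrm{span}\{a(w):w\in\D_\rho\}$, so $\|\cdot\|_{\cL(\cA)}$ is a genuine norm exactly on $V$; likewise $\|\cdot\|_{\cL(\cA_\epsilon)}$ is a norm on the subspace $V_\epsilon:=\mathrm{span}\{a(v):v\in\D_\rho^{(\epsilon)}\}\subseteq V$, and on the finite-dimensional space $V_\epsilon$ all norms are equivalent, say $\|u\|_{\cL(\cA_\epsilon)}\le K_\epsilon\|u\|_2$ for $u\in V_\epsilon$.

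For the left-hand inequality, fix $x$ with $\|x\|_{\cL(\cA_\epsilon)}<\infty$ (equivalently $x\in V_\epsilon$) and $\delta>0$, and choose a decomposition $x=\sum_j c_j\,a(w_j)$ with $\sum_j|c_j|\le\|x\|_{\cL(\cA)}+\delta$. Round each pole $w_j$ to a grid point $v_j\in\D_\rho^{(\epsilon)}$ with $|w_j-v_j|\le\epsilon$ and set $x':=\sum_j c_j\,a(v_j)\in V_\epsilon$. Then $\|x'\|_{\cL(\cA_\epsilon)}\le\sum_j|c_j|\le\|x\|_{\cL(\cA)}+\delta$, and the residual $r:=x-x'=\sum_j c_j\,(a(w_j)-a(v_j))$ lies in $V_\epsilon$ (both $x$ and $x'$ do) with $\|r\|_2\le(\|x\|_{\cL(\cA)}+\delta)\,\omega(\epsilon)$. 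The triangle inequality together with the norm equivalence on $V_\epsilon$ then gives $\|x\|_{\cL(\cA_\epsilon)}\le\|x'\|_{\cL(\cA_\epsilon)}+K_\epsilon\|r\|_2\le(1+K_\epsilon\,\omega(\epsilon))\,(\|x\|_{\cL(\cA)}+\delta)$; letting $\delta\downarrow 0$ yields the claim with $C_\epsilon:=(1+K_\epsilon\,\omega(\epsilon))^{-1}\in(0,1]$. An equivalent, slicker formulation skips the residual bookkeeping: $\mathrm{conv}\{c\,a(w):|c|\le 1\}\cap V_\epsilon$ is compact while $\mathrm{conv}\{c\,a(v):|c|\le 1,\; v\in\D_\rho^{(\epsilon)}\}$ is an origin-symmetric polytope that is a neighborhood of $0$ within $V_\epsilon$, so a positive dilate of the former sits inside the latter, which is precisely the asserted inequality of gauges on $V_\epsilon$.

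The step that I expect to need the most care --- and the reason the statement allows $C_\epsilon\in[0,1]$ rather than $(0,1]$ --- is reconciling the subspaces on which the two norms are finite. If the grid is too coarse to have $V_\epsilon=V$, there is an $x$ with $\|x\|_{\cL(\cA)}<\infty$ but $\|x\|_{\cL(\cA_\epsilon)}=+\infty$, and the left-hand inequality can then hold only with $C_\epsilon=0$; when $x\notin V$ both sides are $+\infty$ and any $C_\epsilon\le 1$ works, so the proposition is true as stated but empty in this regime. The useful case is $\epsilon$ small enough that $V_\epsilon=V$, which holds for all sufficiently fine grids --- pick $d=\dim V$ poles with $a(w_1),\dots,a(w_d)$ linearly independent, an open condition that survives replacing the $w_k$ by nearby grid points --- and there the argument above yields $C_\epsilon>0$ for \emph{every} $x$. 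Moreover, since the $\epsilon$-net property together with uniform continuity of $a$ force the polytopes $\mathrm{conv}\{c\,a(v):|c|\le 1,\; v\in\D_\rho^{(\epsilon)}\}$ to converge in Hausdorff distance to the compact body $\mathrm{conv}\{c\,a(w):|c|\le 1\}$ as $\epsilon\to 0$, one in fact obtains $C_\epsilon\to 1$, so the finite discretization recovers the atomic-norm problem in the limit.
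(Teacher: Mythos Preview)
Your argument is correct: the rounding-plus-norm-equivalence strategy on the finite-dimensional span $V_\epsilon$ does establish the inequality, and your case analysis on whether $V_\epsilon=V$ properly accounts for why the statement permits $C_\epsilon=0$. The route, however, is genuinely different from the paper's. The paper works entirely on the dual side: for any $z$ it writes $\langle \cL(\varphi_a),z\rangle = \langle \cL(\varphi_{\hat a}),z\rangle + \langle \cL(\varphi_a-\varphi_{\hat a}),z\rangle$, bounds the perturbation term by $\|\cL(\varphi_a-\varphi_{\hat a})\|_{\cL(\cA)}\,\|z\|_{\cL(\cA)}^*$ via H\"older, and then controls $\|\cL(\varphi_a-\varphi_{\hat a})\|_{\cL(\cA)}$ through the chain $\|\cdot\|_{\cL(\cA)}\le\|\cdot\|_{\cA}\le\tfrac{8}{\pi}\|\Gamma_{\cdot}\|_1$ together with the Lipschitz estimate $\|\Gamma_{\varphi_a}-\Gamma_{\varphi_b}\|_1\le\tfrac{2\rho}{1-\rho}|a-b|$. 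This yields $\|z\|_{\cL(\cA)}^*\le\|z\|_{\cL(\cA_\epsilon)}^*+\tfrac{16\rho\epsilon}{\pi(1-\rho)}\|z\|_{\cL(\cA)}^*$, whence $C_\epsilon=1-\tfrac{16\rho\epsilon}{\pi(1-\rho)}$. The trade-off is this: your primal argument is more elementary and avoids invoking the Hankel nuclear norm equivalence, but the resulting constant $(1+K_\epsilon\,\omega(\epsilon))^{-1}$ depends on the grid-specific norm-equivalence constant $K_\epsilon$ and on the measurement map through $\omega$, and is not readily quantified. The paper's dual-norm bootstrap buys an \emph{explicit} constant depending only on $\rho$ and $\epsilon$ --- independent of $n$, of the particular $\cL$, and of which $\epsilon$-net was chosen --- which is exactly what is needed downstream in Theorem~\ref{thm:estimation}, where one sets $\epsilon=\tfrac{\pi(1-\rho)\delta}{16\rho}$ to make $C_\epsilon=1-\delta$. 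As a side benefit, the dual inequality also forces $V_\epsilon=V$ whenever $\tfrac{16\rho\epsilon}{\pi(1-\rho)}<1$ (since $\|z\|_{\cL(\cA_\epsilon)}^*=0$ then implies $\|z\|_{\cL(\cA)}^*=0$), so the subspace dichotomy you carefully handle simply does not arise in that regime.
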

\noindent The set $\D_\rho^{(\epsilon)}$ is called an $\epsilon$-net for the set $\D_\rho$.  We show in the appendix that when $\mathcal{L}_k(G) = G(e^{i\theta_k})$, $C_{\epsilon}$ is at least $(1-\tfrac{16 \rho \epsilon}{\pi(1-\rho)})$.  Other measurement ensembles can be treated similarly.

When we replace $\|x\|_{\cL(\cA)}$ with its discretized counterpart $\|x\|_{\cL(\cA)}$ in~\eq{finite-dim-ast}, 
\[
	\minimize_{x} \tfrac{1}{2}\|x-y\|_2^2 + \mu \|x\|_{\cL(\cA_\epsilon)}
\]
is equivalent to
\begin{equation}\label{eq:DAST}
	\minimize_{c}  \tfrac{1}{2}\|Mc-y\|_2^2 + \mu \sum_{w\in \D_\rho^{(\epsilon)}} |c_w|
\end{equation}
where
\[
	M_{ij} = \mathcal{L}_i \left(\tfrac{1-|w_j|^2}{z-w_j}\right)
\]
and $j$ indexes the set $\D_{\rho}^{(\epsilon)}$. That is $M$ is an $n \times |\D_\rho^{(\epsilon)}|$ matrix.  Problem~\eq{DAST} is a weighted $\ell_1$ regularization problem with real or complex data depending on specific problem.  We call~\eq{DAST} \emph{Discretized Atomic Soft Thresholding} (DAST), as coined in~\cite{BhaskarAllerton11}.

The DAST problem can be solved very efficiently with a variety of off-the-shelf tools including SPARSA~\cite{Wright09}, FPC~\cite{Hale08} or even more general purpose packages such as YALMIP~\cite{YALMIP} or CVX~\cite{cvx}.  DAST yields an approximate solution to problem~\eq{atomic-norm-lin-inverse}, and, as we will see, yields a statistically consistent estimate provided the parameter $\epsilon$ is adjusted to meet the desired numerical accuracy.

\section{Statistical Bounds}\label{sec:statistics}
Let $\cL_i: \cH \mapsto \C $ be a linear functional that serves as a measurement operator for the system $H(z)$.  In this section, let us suppose that we obtain noisy measurements of the form
$$
y_i=\cL_i \left( H(z) \right) +\omega_i \qquad i=1, \ldots, n.
$$
where $\omega_i$ is a noise sequence consisting of independent, identically distributed random variables.  In this section, we will specialize our results to the case where $\cL$ returns samples from the frequency response at uniformly spaced frequencies:
$$
\cL_k(H(z))=H(z_k), \qquad z_k=e^{\frac{2 \pi i k}{m} }, \; k=1, \ldots, n.
$$
While the techniques here extend to other measurement ensembles, they will be explored in a longer version of the paper.
%
%

Our goal in this section is to prove that solving the DAST optimization problem yields a good approximation to the transfer function we are probing. The following theorem provides a precise statistical guarantee on the performance of our algorithm.
\begin{theorem}\label{thm:estimation}  Let $G_\star$ be a strictly proper transfer function with bounded Hankel nuclear norm. Suppose the noise sequence $\omega_i$ is i.i.d. Gaussian with mean zero and variance $\sigma^2$.  Choose $\delta \in (0,1)$ and set $\epsilon = \frac{\pi (1-\rho)\delta}{16 \rho}$.  Let $\D_\rho^{(\epsilon)}$ be as in Proposition~\ref{prop:grid} and let $\hat{c}$ be the optimal solution of~\eq{DAST} with 
\[
\mu=2\sigma \sqrt{ n \log \left(\frac{11 \rho^2}{\delta(1-\rho)} \right)}\,.
\]
Set $\hat{G}(z) = \sum_{w \in \D_{\rho}^{(\epsilon)}} \hat{c}_w \frac{1-|w|^2}{z-w}$.  Then if the set of vectors $\{\cL(\varphi_a)\in \R^n~:~a\in\D_{\rho}^{(\epsilon)}\}$ spans $\R^n$, we have 
\[
\begin{aligned}
	&\|\hat{G}(z)-G_{\star}(z)\|_{\cH_2}^2 \leq  
	186 \frac{1+\rho}{1-\rho} \left( \sqrt{\sigma^2 \log\left(\frac{11 \rho^2}{(1-\rho)\epsilon}\right)}\sqrt{\frac{\|\Gamma_{G_\star}\|_{1}^2}{n(1-\delta)^2} } + \frac{4\|\Gamma_{G_\star}\|_{1}^2}{\pi n(1-\delta)^2}   \right)
	\end{aligned}
\]
with probability $1-e^{-o(n)}$.
\end{theorem}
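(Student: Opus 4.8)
My plan is to treat Theorem~\ref{thm:estimation} as an atomic soft-thresholding estimate in $\R^n$ carried through the sampling operator $\cL$, and then transfer the resulting bound on the empirical error back to the $\cH_2$ norm. Write $x^\star=\cL(G_\star)$ and $\hat x=\cL(\hat G)=M\hat c$, so that $y=x^\star+\omega$. First I would invoke the reformulations of Section~\ref{sec:computation}: $\hat x$ is exactly the minimizer of $g(x):=\tfrac12\|x-y\|_2^2+\mu\|x\|_{\cL(\cA_\epsilon)}$ over $\R^n$; the hypothesis that $\{\cL(\varphi_a):a\in\D_\rho^{(\epsilon)}\}$ spans $\R^n$ makes $\|\cdot\|_{\cL(\cA_\epsilon)}$ a genuine finite-valued norm and forces $\sum_w|\hat c_w|=\|\hat x\|_{\cL(\cA_\epsilon)}$, so in particular $\|\hat G\|_{\cA}\le\|\hat x\|_{\cL(\cA_\epsilon)}$. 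From the optimality condition $\mu^{-1}(y-\hat x)\in\partial\|\hat x\|_{\cL(\cA_\epsilon)}$ and convexity of the norm I would derive the basic inequality $\|\hat x-x^\star\|_2^2\le 2\mu\,\|x^\star\|_{\cL(\cA_\epsilon)}$, valid on the event $\{\mu\ge\|\omega\|^\ast_{\cL(\cA_\epsilon)}\}$ where $\|\cdot\|^\ast_{\cL(\cA_\epsilon)}$ is the dual norm, together with the companion bound $\|\hat x\|_{\cL(\cA_\epsilon)}\le\|x^\star\|_{\cL(\cA_\epsilon)}+\|\omega\|_2^2/(2\mu)$ obtained from $g(\hat x)\le g(x^\star)$.

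Next I would estimate the three quantities that enter. (i) Any atomic decomposition of $G_\star$ induces one of $x^\star$ with the same coefficients, so $\|x^\star\|_{\cL(\cA)}\le\|G_\star\|_{\cA}$; Proposition~\ref{prop:grid} with the stated $\epsilon=\tfrac{\pi(1-\rho)\delta}{16\rho}$, for which the appendix estimate gives $C_\epsilon\ge 1-\delta$, yields $\|x^\star\|_{\cL(\cA_\epsilon)}\le(1-\delta)^{-1}\|G_\star\|_{\cA}$, and Theorem~\ref{thm:hankel-nuclearity} then gives $\|x^\star\|_{\cL(\cA_\epsilon)}\le\tfrac{8}{\pi(1-\delta)}\|\Gamma_{G_\star}\|_1$. (ii) For the noise, $\|\omega\|^\ast_{\cL(\cA_\epsilon)}=\max_{w\in\D_\rho^{(\epsilon)}}|\langle\omega,\cL(\varphi_w)\rangle|$ is a maximum of (sub-)Gaussian variables with variance proxy $\sigma^2\|\cL(\varphi_w)\|_2^2$; since $|\varphi_w(z_k)|\le\|\varphi_w\|_{\cH_\infty}=1+|w|\le 1+\rho$, and --- more sharply --- the DFT-aliased impulse response of $\varphi_w$ has squared $\ell_2$ mass of order $1-\rho^2$, one gets $\|\cL(\varphi_w)\|_2^2\le Cn$. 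A Gaussian maximal inequality, union-bounded over the $O(\rho^2\epsilon^{-2})$ points of the $\epsilon$-net, then yields $\|\omega\|^\ast_{\cL(\cA_\epsilon)}\le\mu$ with probability $1-e^{-o(n)}$ for precisely $\mu=2\sigma\sqrt{n\,L}$ with $L:=\log\!\big(\tfrac{11\rho^2}{\delta(1-\rho)}\big)$ --- it is this calibration of $\mu$ against $\sigma$, the net, and $\rho$ that dictates the stated value. (iii) On the same event a $\chi^2$ tail bound gives $\|\omega\|_2^2\le 2n\sigma^2$, so the companion bound becomes $\|\hat G\|_{\cA}\le\tfrac{8}{\pi(1-\delta)}\|\Gamma_{G_\star}\|_1+n\sigma^2/\mu$.

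Then I would pass from $\tfrac1n\|\hat x-x^\star\|_2^2=\tfrac1n\|\cL(\hat G-G_\star)\|_2^2$ to $\|\hat G-G_\star\|_{\cH_2}^2$. Because $z_1,\dots,z_n$ are the $n$th roots of unity, Parseval for the DFT gives, for any $H$ with impulse response $h$,
\[
\tfrac1n\|\cL(H)\|_2^2=\sum_{r=0}^{n-1}\Big|\sum_{q\ge 0}h_{r+qn}\Big|^2=\|H\|_{\cH_2}^2+E(H),\qquad |E(H)|\le 2\Big(\sum_{j\ge 1}|h_j|\Big)\Big(\sum_{j>n}|h_j|\Big).
\]
Each atom satisfies $\sum_j|(\varphi_w)_j|=1+|w|\le 1+\rho$ and $\sum_{j>n}|(\varphi_w)_j|\le(1+\rho)\rho^n$, hence $|E(H)|\le 2(1+\rho)^2\rho^n\|H\|_{\cA}^2$. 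Applying this with $H=\hat G-G_\star$, using $\|H\|_{\cA}\le\|\hat G\|_{\cA}+\|G_\star\|_{\cA}=O(\|\Gamma_{G_\star}\|_1/(1-\delta))$ from steps (i)--(iii), and folding $\rho^n$ into $1/n$ in the regime $\rho^n\le 1/n$, I obtain
\[
\|\hat G-G_\star\|_{\cH_2}^2\ \le\ \tfrac1n\|\hat x-x^\star\|_2^2\ +\ O\!\left(\tfrac{1+\rho}{1-\rho}\cdot\tfrac{\|\Gamma_{G_\star}\|_1^2}{n(1-\delta)^2}\right).
\]
Substituting the basic inequality, the bound $\|x^\star\|_{\cL(\cA_\epsilon)}\le\tfrac{8}{\pi(1-\delta)}\|\Gamma_{G_\star}\|_1$, and $\mu=2\sigma\sqrt{nL}$ turns the first term into $\tfrac{32}{\pi}\sqrt{\sigma^2 L}\,\sqrt{\|\Gamma_{G_\star}\|_1^2/(n(1-\delta)^2)}$; collecting the numerical constants (and the $\tfrac{1+\rho}{1-\rho}$ coming from the aliasing estimate, and rewriting $L$ in terms of $\epsilon$ via the definition of $\epsilon$) gives the stated inequality, with $186$ as the absorbed constant.

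The hard part will be the last step --- quantifying how faithfully $n$ equispaced frequency samples reproduce $\|\cdot\|_{\cH_2}$ on the error $\hat G-G_\star$. One must use the stability radius twice: to render the high-frequency tail of the impulse response negligible, and --- through the atomic-norm control of $\hat G$ and $G_\star$ --- to keep the DFT aliasing cross-terms from swamping the signal. Obtaining a clean factor $\tfrac{1+\rho}{1-\rho}$ rather than something blowing up like $(1-\rho)^{-2}$ forces this tail comparison to be carried out in $\ell_1$ rather than in $\cH_\infty$. Once the correct pair of norms is identified, the remaining pieces --- the soft-thresholding basic inequality and the Gaussian maximal inequality --- are routine.
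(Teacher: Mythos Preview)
Your architecture matches the paper's: cast DAST as atomic soft-thresholding in $\R^n$, invoke the basic inequality $\|\hat x-x^\star\|_2^2\le 2\mu\|x^\star\|_{\cL(\cA_\epsilon)}$ together with a companion bound on $\|\hat x\|_{\cL(\cA_\epsilon)}$ (the paper records these as Theorem~\ref{thm:badri}), control $\|x^\star\|_{\cL(\cA_\epsilon)}$ via Proposition~\ref{prop:grid} and Theorem~\ref{thm:hankel-nuclearity}, and calibrate $\mu$ by a Gaussian maximal inequality over the $\epsilon$-net. Where you genuinely diverge is in lifting $\tfrac1n\|\cL(\Delta)\|_2^2$ to $\|\Delta\|_{\cH_2}^2$. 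You use Parseval for the DFT and time-domain aliasing of the impulse response, getting an error of order $(1+\rho)^2\rho^{\,n}\|\Delta\|_{\cA}^2$. The paper instead stays on the frequency side: it proves the Lipschitz bound $|H(e^{i\theta_1})|^2-|H(e^{i\theta_2})|^2\le 4\tfrac{1+\rho}{1-\rho}\|H\|_{\cA}^2|\theta_1-\theta_2|$ from the atom-wise estimates $\|\varphi_a\|_{\cH_\infty}\le 2$ and $|\varphi_a(z_1)-\varphi_a(z_2)|\le\tfrac{1+\rho}{1-\rho}|\theta_1-\theta_2|$, and then compares the $\cH_2$ integral to its Riemann sum, obtaining the error term $\tfrac{4\pi}{n}\tfrac{1+\rho}{1-\rho}\|\Delta\|_{\cA}^2$. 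Your aliasing route yields an exponentially small remainder (hence a sharper bound for large $n$), but it is tied to exact roots of unity and needs the asymptotic proviso $\rho^{\,n}\le 1/n$; the paper's Lipschitz route works for any sufficiently fine frequency grid and produces the prefactor $\tfrac{1+\rho}{1-\rho}$ and the constant $186$ directly. One slip to tidy: your claim $\|\Delta\|_{\cA}=O(\|\Gamma_{G_\star}\|_1/(1-\delta))$ drops the $n\sigma^2/\mu\asymp\sigma\sqrt{n/L}$ contribution from your companion bound; it does enter $\|\Delta\|_{\cA}^2$, but is harmlessly annihilated by the $\rho^{\,n}$ factor, so your conclusion survives.
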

\begin{corollary} \label{cor:1}
There is a quantity $C$ depending on $\rho$ and $\sigma$ such that for sufficiently large $n$
$$\|\hat{G}(z)-G_{\star}(z)\|_{\cH_2}^2\ \leq C \| \Gamma_{G_{\star}} \|_1 n^{-\frac{1}{2}}$$
with probability exceeding $1-e^{-o(n)}$.
\end{corollary}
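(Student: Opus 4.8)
\emph{Approach.} Since~\eq{DAST} is precisely the denoising problem~\eq{finite-dim-ast} for the finite-dimensional atomic norm $\|\cdot\|_{\cL(\cA_\epsilon)}$, the plan is to run the usual ``basic inequality'' argument for atomic-norm denoising (as in~\cite{BhaskarAllerton11}) and then supply two ingredients special to this setting: (i) controlling the atomic norm of a feasible competitor by the Hankel nuclear norm via Theorem~\ref{thm:hankel-nuclearity} and Proposition~\ref{prop:grid}, which produces the discretization factor $C_\epsilon\ge 1-\delta$ and hence the $(1-\delta)^{-2}$; and (ii) passing from the empirical quadratic form $\tfrac1n\sum_k|\cL_k(\hat G)-\cL_k(G_\star)|^2$ (values at roots of unity) to the genuine $\cH_2$ error, which is where the decay-rate prefactor $\tfrac{1+\rho}{1-\rho}$ enters. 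This second step is where I expect the real work to be.

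\emph{Competitor, basic inequality, noise.} Take a near-minimal decomposition $G_\star=\sum_j\lambda_j\varphi_{w_j}$ with $\sum_j|\lambda_j|\approx\|G_\star\|_{\cA}$, move each $w_j$ to the nearest net point, and let $c_\star$ be the resulting feasible coefficient vector for~\eq{DAST}, with $\tilde G_\star:=\sum_w(c_\star)_w\varphi_w$. By Proposition~\ref{prop:grid} with $\epsilon=\tfrac{\pi(1-\rho)\delta}{16\rho}$ one has $C_\epsilon\ge 1-\delta$, so $\|c_\star\|_1\le\tfrac{1}{1-\delta}\|G_\star\|_{\cL(\cA)}\le\tfrac{1}{1-\delta}\|G_\star\|_{\cA}\le\tfrac{8}{\pi(1-\delta)}\|\Gamma_{G_\star}\|_1$ by the lower bound in~\eq{norm-ineq-chain}; one also records that $\|\cL(G_\star)-Mc_\star\|_2$ is controlled by $\epsilon$, $(1-\rho)^{-1}$, and $\|\Gamma_{G_\star}\|_1\sqrt n$. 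With $h=\hat c-c_\star$ and $r=y-Mc_\star=\omega+(\cL(G_\star)-Mc_\star)$, optimality of $\hat c$ in~\eq{DAST} gives
\[
\tfrac12\|Mh\|_2^2\le\langle r,Mh\rangle+\mu\big(\|c_\star\|_1-\|\hat c\|_1\big)\le\big(\|M^*r\|_\infty+\mu\big)\|h\|_1 ;
\]
the deterministic part of $r$ is absorbed by Cauchy--Schwarz together with the bound on $\|\cL(G_\star)-Mc_\star\|_2$, while $M^*\omega$ has coordinates $\langle\omega,\cL(\varphi_a)\rangle$, $a\in\D_\rho^{(\epsilon)}$, Gaussian with variance $\sigma^2\|\cL(\varphi_a)\|_2^2\le c\,\sigma^2 n$ (a geometric-series estimate on impulse responses of poles of modulus at most $\rho$); a union bound over the net, whose cardinality is polynomial in $\tfrac{\rho}{(1-\rho)\delta}$, then gives $\|M^*\omega\|_\infty\le\tfrac12\mu$ with probability $1-e^{-o(n)}$ for the stated $\mu=2\sigma\sqrt{n\log(11\rho^2/(\delta(1-\rho)))}$, the $\log$ in $\mu$ reflecting the log-cardinality of the net. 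On this event $\|h\|_1\le 4\|c_\star\|_1$ and $\tfrac1n\|Mh\|_2^2\le\tfrac{c'\mu}{n}\|c_\star\|_1+\tfrac{c''\|\Gamma_{G_\star}\|_1^2}{n(1-\delta)^2}$ for absolute constants $c,c',c''$.

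\emph{From empirical error to $\cH_2$, and the corollary.} It remains to show $\tfrac1n\|Mh\|_2^2\ge\tfrac{1-\rho}{1+\rho}\|\hat G-G_\star\|_{\cH_2}^2$ up to an exponentially small slack: $Mh$ is the vector of values of the strictly proper $\hat G-\tilde G_\star$ at the sampling points, and since all its poles have modulus at most $\rho$ its impulse response decays geometrically, so the Parseval/aliasing identity relating samples on roots of unity to $\cH_2$ coefficients is a controlled perturbation of an isometry whose distortion is governed solely by $\rho$, with folded tails of size $\rho^{\Theta(n)}\|\Gamma_{G_\star}\|_1$. Making this precise --- extracting the clean constant $\tfrac{1-\rho}{1+\rho}$ and discharging the geometric tails --- is \emph{the main obstacle}, and it is here that the hypothesis that $\{\cL(\varphi_a):a\in\D_\rho^{(\epsilon)}\}$ spans $\R^n$ is used, guaranteeing that $\|\cdot\|_{\cL(\cA_\epsilon)}$ and the empirical norm are comparable. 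Combining with the previous step, substituting $\|c_\star\|_1\le\tfrac{8}{\pi(1-\delta)}\|\Gamma_{G_\star}\|_1$ and the chosen $\mu$, and tidying constants gives
\[
\|\hat G-G_\star\|_{\cH_2}^2\le 186\,\tfrac{1+\rho}{1-\rho}\left(\sigma\sqrt{\log\!\big(\tfrac{11\rho^2}{(1-\rho)\epsilon}\big)}\,\tfrac{\|\Gamma_{G_\star}\|_1}{\sqrt n\,(1-\delta)}+\tfrac{4\|\Gamma_{G_\star}\|_1^2}{\pi n(1-\delta)^2}\right),
\]
which is the claimed bound. Corollary~\ref{cor:1} follows by fixing $\delta=\tfrac12$, noting the $n^{-1}$ term is dominated by the $n^{-1/2}$ term once $n$ is large, and absorbing all remaining $\rho$- and $\sigma$-dependence (including the $\log$) into one constant $C$, leaving $\|\hat G-G_\star\|_{\cH_2}^2\le C\,\|\Gamma_{G_\star}\|_1\,n^{-1/2}$.
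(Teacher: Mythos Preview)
Your final paragraph --- fixing $\delta$, dropping the $n^{-1}$ term once $n$ is large, and absorbing the $\rho$- and $\sigma$-dependent factors into $C$ --- is exactly how the corollary follows from Theorem~\ref{thm:estimation}, and that part is fine. The difficulty is that your re-derivation of Theorem~\ref{thm:estimation} itself has a real gap, and it occurs precisely where you did \emph{not} flag the obstacle.

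The problem is the competitor $c_\star$. By pushing poles to the nearest net point you incur a deterministic residual $r_{\mathrm{det}}=\cL(G_\star)-Mc_\star$, and you correctly record that $\|r_{\mathrm{det}}\|_2$ is of order $\epsilon\,(1-\rho)^{-1}\,\|\Gamma_{G_\star}\|_1\,\sqrt n$. But then, after Cauchy--Schwarz and completing the square, this contributes $\tfrac{1}{n}\|r_{\mathrm{det}}\|_2^2$ to your bound on $\tfrac{1}{n}\|Mh\|_2^2$, and that quantity is of order $\epsilon^2\|\Gamma_{G_\star}\|_1^2/(1-\rho)^2$ --- a \emph{constant} in $n$, not the $c''\|\Gamma_{G_\star}\|_1^2/\big(n(1-\delta)^2\big)$ you wrote. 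Since $\epsilon$ is fixed once $\delta$ and $\rho$ are, this bias does not vanish as $n\to\infty$, and your argument does not yield consistency, let alone the $n^{-1/2}$ rate of the corollary. The paper avoids this entirely by applying the basic inequality (Theorem~\ref{thm:badri}) directly with $x_\star=\cL(G_\star)$ in the norm $\|\cdot\|_{\cL(\cA_\epsilon)}$, with no intermediate competitor; this is exactly where the spanning hypothesis is used, to guarantee $\|\cL(G_\star)\|_{\cL(\cA_\epsilon)}<\infty$, after which Proposition~\ref{prop:grid} gives $\|\cL(G_\star)\|_{\cL(\cA_\epsilon)}\le(1-\delta)^{-1}\|G_\star\|_{\cA}$.

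Your empirical-to-$\cH_2$ step is also different from the paper's, though here your idea is sound. The paper does \emph{not} lower-bound $\tfrac{1}{n}\sum_k|\Delta(e^{i\theta_k})|^2$ by a multiple of $\|\Delta\|_{\cH_2}^2$ via an aliasing/Parseval argument. Instead it upper-bounds the $\cH_2$ integral by the Riemann sum plus a Lipschitz correction, using Lemmas~\ref{lemma:atom-hinf} and~\ref{lemma:atom-lipschitz} to get
\[
\|\Delta\|_{\cH_2}^2 \;\le\; \frac{1}{n}\sum_{k=0}^{n-1}|\Delta(e^{i\theta_k})|^2 \;+\; \frac{4\pi}{n}\cdot\frac{1+\rho}{1-\rho}\,\|\Delta\|_{\cA}^2,
\]
and then bounds $\|\Delta\|_{\cA}$ separately via the triangle inequality and Theorem~\ref{thm:badri}. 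This is the source of the explicit $n^{-1}$ term in Theorem~\ref{thm:estimation}. Your aliasing route would actually give an exponentially small (in $n$) correction rather than an $n^{-1}$ one, so it is a legitimate alternative for this step --- but the spanning hypothesis plays no role there, contrary to what you wrote.
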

\noindent Before we describe how to prove this theorem and its corollary, let us first unpack the features.  First of all, the right hand side is a parameter of the number of samples, the Hankel nuclear norm of the true system, and the stability radius of the true system.  Also, if the McMillan degree of $G_\star(z)$ is $d$, then we can upper bound the Hankel nuclear norm by the product of the McMillan degree and the Hankel norm of $G_\star$: $\|\Gamma_{G_\star}\|_1 \leq d \|\Gamma_{G_\star}\|$.
Second, note that as $n$ tends to infinity, the right hand side tends to zero.  In particular, this means that our discretized algorithm is consistent, and we can quantify the worst case convergence rate.  

The proof of theorem~\ref{thm:estimation} is provided in the appendix.  We prove this theorem by first upper bounding the $\cH_2$ in terms of the mean square error on the observed samples.  We show that this empirical mean square error can be upper bounded in terms of the Hankel nuclear norm of $G_\star$ times the \emph{dual atomic norm} of the noise sequence $\omega$.  We estimate this norm, and in the process compute the optimal value of the regularization parameter.  Putting these pieces together yields our main result.

\section{Numerical Experiments}\label{sec:experiments}
In this section we validate the proposed framework via some preliminary numerical experiments conducted in MATLAB. In many of the experiments where the solution of convex optimization problems was required, the software package CVX \cite{cvx} was used. Throughout our experiments, the discretization of the unit circle was held to approximately $2000$ points.

In the first experiment we consider a stable system $G$ with two poles. We make $m=80$ noisy observations of the frequency response by evaluating the transfer functions $G(z_j)$ at regularly spaced frequencies $z_j=e^{i\theta_j}$ on the complex unit circle. The noise is additive i.i.d. zero-mean Gaussian with a variance of $\sigma^2=10^{-4}$. We reconstruct $\hat{G}(z)$ by DAST as proposed in section \ref{sec:computation}. Our algorithm recovers a system of degree $6$ which achieves an $\mathcal{H}_2$ performance error of $.0043$ and $\mathcal{H}_{\infty}$ error of $.0079$. The locations of the true and recovered poles are depicted graphically in Fig. \ref{fig:1}.

\begin{figure}
  \begin{center}
    \includegraphics[scale=0.4,trim=0mm 60mm 0mm 50mm, clip]{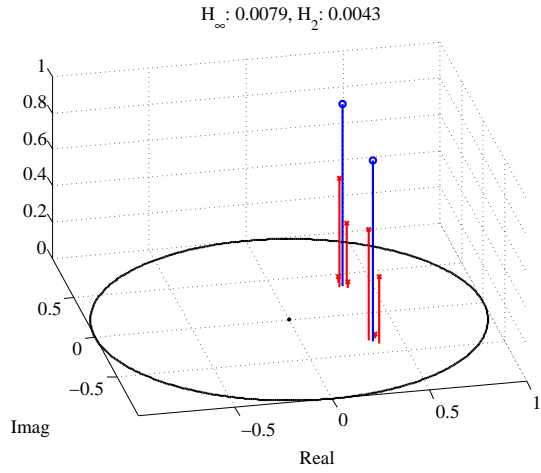} 
  \end{center}
  \caption{In the figure above, the locations marked with a circle represent the locations of the poles (in the complex plane) of a second order discrete time LTI system. The locations marked with a cross correspond to poles recovered by DAST.}
\label{fig:1}
\end{figure}

In Fig. \ref{fig:2} we consider again the problem of recovering a second order system from noisy frequency response measurements. The noise variance is set to $\sigma^2=10^{-4}$. The plot below shows the performance of DAST as the number of measurements increases. The error metric used is the $\cH_2$ norm.
\begin{figure}
  \begin{center}
    \includegraphics[scale=0.4,trim=0mm 0mm 0mm 10mm, clip]{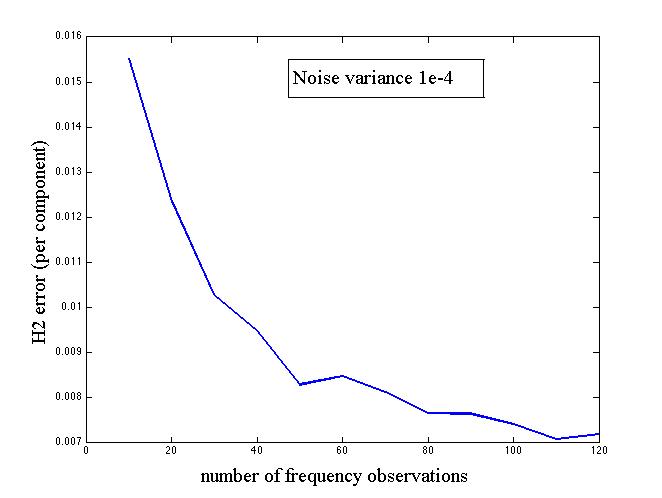} 
  \end{center}
  \caption{In the figure above, we plot the $\cH_2$ estimation error between the true system and the recovered system as the number of frequency measurements $m$ varies.}
\label{fig:2}
\end{figure}

In Fig. \ref{fig:3}, we compare our algorithm to a widely used method known as \emph{subspace identification}  \cite[Chapter 10]{LjungBook}. A second order system, starting from an initial condition of $x[0]=0$ is excited by a random input $u[t]$ corresponding to an i.i.d. sequence of zero-mean, unit-variance Gaussian random variables for $m$ time steps. We record the output $y[t]$ of the system for $m$ time steps. From this input-output relationship, we use DAST and subspace identification to attempt to reconstruct the unknown system. We plot  the estimation error in the $\cH_2$ norm as $m$ is increased from $10$ time units to $120$ time units. As is evident, the performance of DAST is superior to that of subspace identification when $m$ is small, i.e. of the order of $10$ to $50$ measurements.

Another aspect that we emphasize is that in these experiments, subspace identification was assisted with the knowledge of the true system order. If the wrong model order was used, the performance of subspace identification worsened noticeably. By contrast, DAST does not need knowledge of the true system order.

\begin{figure}
  \begin{center}
    \includegraphics[scale=0.4,trim=0mm 0mm 0mm 10mm, clip]{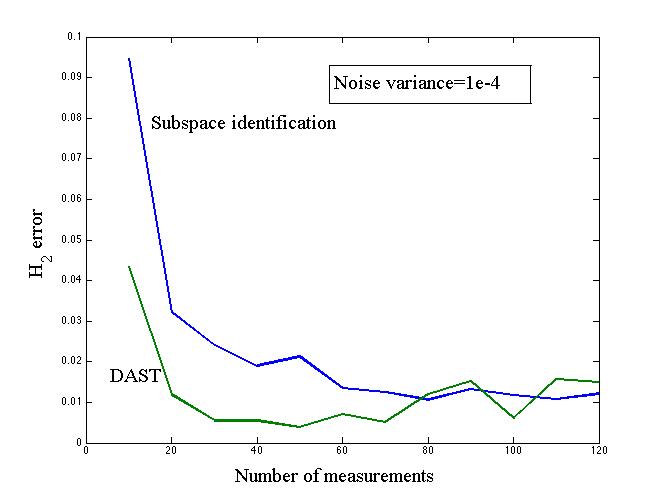} 
  \end{center}
  \caption{In the figure above, we compare $\cH_2$ estimation error of the algorithm proposed in this paper (DAST) and the error obtained by the subspace identification method.}
\label{fig:3}
\end{figure}

\section{Conclusion}\label{sec:conclusion}

By using the atomic norm framework of~\cite{CRPW10}, we were able to posit a reasonable regularizer for linear systems, understand the computational demands of such a regularizer, and analyze its statistical performance.  Since it is closely connected to the Hankel nuclear norm but is computationally more practical, we believe that our atomic norm will be useful in a variety of practical implementations and also in theoretical analysis.  However, there are still several outstanding questions to address before we fully understand the potential of this norm.  We list several of these open problems here.

\paragraph{Other measurement ensembles} Our analysis in Section~\ref{sec:statistics} focused on the particular case of sampling the frequency response at regular intervals.  By focusing on this example, we were able illustrate the critical ingredients to computing convergence rates.  First, we needed to show that our measurement error provided a reasonable upper bound on the distance to the true transfer function.  Second, we used convex analysis to upper bound the measurement error in terms of the statistics of the noise process.  Third, we estimated the noise statistics using probabilistic techniques and appealing to the structure of the atomic set and its $\epsilon$-nets.  This methodology can be extended to the other sampling methods described in Section~\ref{sec:computation}, and may also be extendable to estimating transfer functions from pairs of input-output time series.

\paragraph{Fast Rates and minimax optimality}  The rates provided by Theorem~\ref{thm:estimation} demonstrate that the DAST algorithm is asymptotically consistent.  However, we believe the upper bound we have derived is quite crude.  In particular, as discussed in~\cite{BhaskarAllerton11}, it may very well be possible to improve our upper bounds by leveraging more of the geometry of the set of single-pole transfer functions.  It would be interesting to find reasonable lower-bounds on the reconstruction error from limited measurements, and to see how close we can match these worst-case estimates via a new analysis.

\paragraph{Interpolating with derivatives}  While gridding the space of poles enables us to quickly solve atomic norm problems, a main drawback is that we are then can never exactly localize the true poles of the system without an extremely fine grid.  One recent proposal to enable such a localization uses a linearization technique to simultaneously fit a model on the grid points and at the derivatives of the transfer functions at these grid points~\cite{Simoncelli11}.  It would be of interest to see if such an adjoining method could work in this setting, and future experiments will evaluate the improvements on system identification in theory and in practice.

\paragraph{Extension to MIMO Systems}  While we focused on the single-input single-output (SISO) case in this paper, we expect that these techniques would extend to the multi-input multi-output (MIMO) case. One simple extension to note is that if the number of inputs and outputs in the system remain small, the SISO techniques presented herein could be applied to each input-output pair. Alternative approaches that avoid this pairwise identification would be important in large-scale systems with many inputs and outputs, this is an important direction for future research.

To extend our methodology to MIMO systems, we need to find an appropriate set of atoms. One possibility is the set
\[
\cA = \left\{ \frac{(1-|w|^2) cb^* }{z-w} ~:~w\in \D, c\in\mathbb{S}^{r-1},\,\,b\in\mathbb{S}^{p-1} \right\}\,,
\]
where $p$ is the number of inputs and $r$ is the number of outputs.  Any discrete time, MIMO system with  McMillan degree $d$ can be written in terms of $d$ of these atoms.  The only difficulty remains computing or approximating the norm $\|u\|_{\cL(\cA)}$ for finite measurements as described in Section~\ref{sec:computation}.

\bibliography{hankel_cdc}

\appendix

\section{Useful Lemmas} Before we proceed with the proofs of our main results, let us record a few useful lemmas.  Recall that our atomic functions are defined as
\[
	\varphi_a(z) = \frac{1-|a|^2}{z-a}
\]

\begin{lemma}\label{lemma:atom-hinf}
For any $a \in D$, 	$\|\varphi_a(z)\|_{\cH_\infty} \leq 2$.
\end{lemma}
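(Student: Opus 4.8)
The plan is to reduce the claim to an elementary planar geometry fact. By definition, $\|\varphi_a\|_{\cH_\infty} = \sup_{z \in \bbS} |\varphi_a(z)| = (1-|a|^2)\,\sup_{|z|=1} \frac{1}{|z-a|}$, and since $a \in \D$ the function $\varphi_a$ is analytic on and outside $\bbS$, so this supremum is finite and the computation is legitimate. Thus the task is simply to evaluate $\sup_{|z|=1}\frac{1}{|z-a|}$, equivalently to minimize $|z-a|$ over the unit circle.

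First I would observe that for any $z$ with $|z|=1$, the reverse triangle inequality gives $|z-a| \ge |z| - |a| = 1 - |a|$, and this bound is attained (take $z = a/|a|$ when $a \neq 0$, or any $z$ on $\bbS$ when $a = 0$). Hence $\sup_{|z|=1}\frac{1}{|z-a|} = \frac{1}{1-|a|}$. Substituting back,
\[
\|\varphi_a\|_{\cH_\infty} = \frac{1-|a|^2}{1-|a|} = \frac{(1-|a|)(1+|a|)}{1-|a|} = 1+|a|.
\]
Since $|a| < 1$ for $a \in \D$, this yields $\|\varphi_a\|_{\cH_\infty} = 1 + |a| \le 2$, as claimed (in fact the bound is strict, but the stated inequality suffices).

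There is essentially no obstacle here: the only point requiring a word of care is verifying that the distance from an interior point $a$ to the unit circle equals $1-|a|$, which follows from the reverse triangle inequality together with exhibiting the minimizer, and noting the degenerate case $a=0$ separately. Everything else is a one-line algebraic simplification.
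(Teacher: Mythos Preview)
Your proof is correct and follows essentially the same approach as the paper: both bound $|z-a|\ge 1-|a|$ on the unit circle and simplify $(1-|a|^2)/(1-|a|)=1+|a|\le 2$. Your version is slightly more detailed (noting attainment and the $a=0$ case), but the argument is the same.
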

\begin{proof}
For $z = \exp(i\theta)$,	$|\frac{1-|a|^2}{z-a}|  \leq |\frac{1-|a|^2}{1-|a|}|  \leq 1+|a| \leq 2$.
\end{proof}

\begin{lemma}  \label{lemma:atom-lipschitz}
For any $a \in \D_\rho$ and $z_k=\exp(i\theta_k)$ we have 
\begin{align}
\label{eq:vary-freqs} |\varphi_a(z_1) - \varphi_a(z_2)| &\leq \frac{1+\rho}{1-\rho} |\theta_1-\theta_2|
\end{align}
\end{lemma}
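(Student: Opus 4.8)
The plan is to estimate the derivative of $\varphi_a$ along the unit circle and integrate. Write $z_k = e^{i\theta_k}$ and consider the function $\theta \mapsto \varphi_a(e^{i\theta}) = \frac{1-|a|^2}{e^{i\theta}-a}$. Differentiating with respect to $\theta$ gives
\[
\frac{d}{d\theta}\varphi_a(e^{i\theta}) = \frac{-(1-|a|^2)\, i e^{i\theta}}{(e^{i\theta}-a)^2}\,,
\]
so that
\[
\left|\frac{d}{d\theta}\varphi_a(e^{i\theta})\right| = \frac{1-|a|^2}{|e^{i\theta}-a|^2}\,.
\]
The main step is then to bound this quantity uniformly over $\theta$ and over $a \in \D_\rho$. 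Since $|e^{i\theta}-a| \geq 1 - |a| \geq 1-\rho$, we get $|e^{i\theta}-a|^2 \geq (1-\rho)^2$, and since $1-|a|^2 = (1-|a|)(1+|a|) \leq (1-\rho)(1+\rho)$ is not quite what we want directly — instead it is cleaner to observe $1-|a|^2 \leq 1+|a| \leq 1+\rho$ times $(1-|a|)$, but the sharpest bookkeeping is: $\frac{1-|a|^2}{|e^{i\theta}-a|^2} \leq \frac{1-|a|^2}{(1-|a|)^2} = \frac{1+|a|}{1-|a|} \leq \frac{1+\rho}{1-\rho}$, using that $t \mapsto \frac{1+t}{1-t}$ is increasing on $[0,1)$. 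This gives the uniform derivative bound $\left|\frac{d}{d\theta}\varphi_a(e^{i\theta})\right| \leq \frac{1+\rho}{1-\rho}$.

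Finally, the mean value inequality (or the fundamental theorem of calculus applied to the real and imaginary parts, or simply integrating $|\varphi_a'|$ along the arc from $\theta_2$ to $\theta_1$) yields
\[
|\varphi_a(z_1) - \varphi_a(z_2)| = \left| \int_{\theta_2}^{\theta_1} \frac{d}{d\theta}\varphi_a(e^{i\theta})\, d\theta \right| \leq \frac{1+\rho}{1-\rho}\,|\theta_1 - \theta_2|\,,
\]
which is exactly \eq{vary-freqs}.

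I do not anticipate a genuine obstacle here; the only point requiring a little care is the uniform bound on $\frac{1-|a|^2}{|e^{i\theta}-a|^2}$, where one should resist the temptation to bound numerator and denominator by their crudest individual estimates and instead exploit the cancellation $\frac{1-|a|^2}{(1-|a|)^2} = \frac{1+|a|}{1-|a|}$ before invoking monotonicity in $|a|$. Everything else is routine.
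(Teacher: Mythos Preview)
Your proof is correct and is essentially the same as the paper's: both hinge on the estimate $\frac{1-|a|^2}{|e^{i\theta}-a|^2}\le \frac{1-|a|^2}{(1-|a|)^2}=\frac{1+|a|}{1-|a|}\le \frac{1+\rho}{1-\rho}$. The only cosmetic difference is that the paper subtracts the two fractions directly over a common denominator and then invokes the chord--arc inequality $|z_1-z_2|\le|\theta_1-\theta_2|$, whereas you bound the derivative and integrate; the substance is identical.
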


\begin{proof}
\begin{align*}
|\varphi_a(z_1) - \varphi_a(z_2)|
& =   \left|\frac{1-|a|^2}{z_1-a}-\frac{1-|a|^2}{z_2-a}\right|\\
&\leq (1-|a|^2) \left|\frac{z_1-z_2}{(z_1-a)(z_2-a)} \right|\\
&\leq \frac{(1-|a|^2)}{(1-|a|)^2} |z_1-z_2| \leq \frac{1+\rho}{1-\rho} |\theta_1-\theta_2|\,.
\end{align*}
\end{proof}

\begin{lemma} For any $a,b \in \D_\rho$,
\begin{equation}\label{eq:hankel-op-bound}
	\|\Gamma_{\varphi_a}-\Gamma_{\varphi_b}\|_1 \leq \frac{2\rho}{1-\rho} |a-b|\,.
\end{equation}
\end{lemma}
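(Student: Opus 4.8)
The plan is to compute the Hankel operator of the single-pole atom $\varphi_a$ explicitly, since for a rank-one Hankel operator the nuclear norm is simply its operator norm, and then bound the nuclear norm of the difference $\Gamma_{\varphi_a} - \Gamma_{\varphi_b}$ by splitting the operator into a rank-one piece (coming from the fact that $\varphi_a$ and $\varphi_b$ have different scalings $1-|a|^2$ versus $1-|b|^2$) plus a remainder, or, more cleanly, by using the integral/parametrized representation $\varphi_a = \int$-type trick. Concretely, the impulse response of $\varphi_a$ is $g_k = (1-|a|^2) a^{k-1}$ for $k \geq 1$, so the Hankel matrix of $\varphi_a$ has $(j,k)$ entry $(1-|a|^2) a^{j+k-2}$, i.e. $\Gamma_{\varphi_a} = (1-|a|^2)\, v_a v_a^{\top}$ where $v_a = (1, a, a^2, \ldots)$. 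Thus $\Gamma_{\varphi_a}$ is rank one with $\|\Gamma_{\varphi_a}\|_1 = (1-|a|^2)\|v_a\|_2^2 = (1-|a|^2)\cdot \frac{1}{1-|a|^2} = 1$, consistent with the stated normalization.

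The key step is then to estimate $\|\Gamma_{\varphi_a} - \Gamma_{\varphi_b}\|_1 = \|(1-|a|^2) v_a v_a^{\top} - (1-|b|^2) v_b v_b^{\top}\|_1$. I would write this as a telescoping difference: add and subtract $(1-|a|^2) v_a v_b^{\top}$ (or symmetrize), so that
\[
(1-|a|^2) v_a v_a^{\top} - (1-|b|^2) v_b v_b^{\top} = (1-|a|^2) v_a (v_a - v_b)^{\top} + \big[(1-|a|^2) v_a - (1-|b|^2) v_b\big] v_b^{\top},
\]
and then apply the triangle inequality for $\|\cdot\|_1$ together with $\|x y^{\top}\|_1 = \|x\|_2 \|y\|_2$. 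This reduces everything to bounding $\|v_a - v_b\|_2$ and $\|(1-|a|^2) v_a - (1-|b|^2) v_b\|_2$ in terms of $|a-b|$, using $|a|, |b| \leq \rho$. For the first, $\|v_a - v_b\|_2^2 = \sum_{k\ge 0} |a^k - b^k|^2$, and since $|a^k - b^k| \le k \rho^{k-1} |a-b|$, this sum is $|a-b|^2 \sum_k k^2 \rho^{2k-2}$, which is $O\!\big(|a-b|^2/(1-\rho)^{?}\big)$; one must be careful to track the exact power of $(1-\rho)$ so as to land on the claimed bound $\frac{2\rho}{1-\rho}|a-b|$ rather than something with a worse denominator. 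A cleaner route that likely gives the sharp constant is to use the integral representation $\varphi_a - \varphi_b = \int_{[b,a]} \partial_w \varphi_w \, dw$ along the segment from $b$ to $a$ and bound $\|\Gamma_{\partial_w \varphi_w}\|_1$ uniformly; differentiating $\varphi_w(z) = \frac{1-|w|^2}{z-w}$ in $w$ and computing the resulting (low-rank) Hankel operator directly should produce the factor $\frac{2\rho}{1-\rho}$ after integrating over a path of length $|a-b|$.

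The main obstacle I anticipate is bookkeeping the powers of $(1-\rho)$ and the numerical constants so that the final bound is exactly $\frac{2\rho}{1-\rho}|a-b|$ and not merely $O(|a-b|)$; the naive entrywise or $\|v_a - v_b\|_2$ estimate tends to overshoot the denominator, so the integral-of-derivative approach (controlling $\sup_{w \in \D_\rho}\|\Gamma_{\partial_w \varphi_w}\|_1$, which is again a rank-$\le 2$ operator and hence has easily computed nuclear norm) is the more promising way to get the stated constant cleanly. A secondary subtlety is that $|w|^2$ is not holomorphic in $w$, so when differentiating $\varphi_w$ with respect to the complex parameter $w$ one should treat the $w$-path as real-parametrized (arc length) and bound the derivative of the impulse response coefficients $k \mapsto (1-|w|^2) w^{k-1}$ accordingly, which is elementary but must be done with care.
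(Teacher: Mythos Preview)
Your plan is essentially the paper's proof: write $\Gamma_{\varphi_a}$ as a rank-one outer product, telescope the difference, and use $\|xy^{\top}\|_1=\|x\|_2\|y\|_2$ together with the triangle inequality. The one simplification you are missing---and which dissolves your stated worry about the constants---is to absorb the scalar into the vector: set $\zeta_a:=\sqrt{1-|a|^2}\,(1,a,a^2,\ldots)^{\top}$, so that $\|\zeta_a\|_2=1$ and $\Gamma_{\varphi_a}=\zeta_a\zeta_a^{\top}$. The telescoping then becomes symmetric,
\[
\zeta_a\zeta_a^{\top}-\zeta_b\zeta_b^{\top}=\zeta_a(\zeta_a-\zeta_b)^{\top}+(\zeta_a-\zeta_b)\zeta_b^{\top},
\]
which yields simply $\|\Gamma_{\varphi_a}-\Gamma_{\varphi_b}\|_1\le 2\|\zeta_a-\zeta_b\|_2$. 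The paper finishes by computing $\|\zeta_a-\zeta_b\|_2^2=2\bigl(1-\Re\langle\zeta_a,\zeta_b\rangle\bigr)$ from the closed-form inner product $\langle\zeta_a,\zeta_b\rangle=\tfrac{\sqrt{1-|a|^2}\sqrt{1-|b|^2}}{1-\bar a b}$ and a Taylor expansion in $|a-b|$. This bypasses both the entrywise bound $|a^k-b^k|\le k\rho^{k-1}|a-b|$ (which, as you anticipated, overshoots the power of $1-\rho$) and the integral-of-derivative detour, so neither of your two proposed endgames is needed.
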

\begin{proof}
	The Hankel operator for $\varphi_a(z)$ is given by the semi-infinite, rank one matrix
	\[
		(1-|a|^2)\left[\begin{array}{ccccc}
			1 & a & a^2 & a^3 & \cdots\\
			a & a^2 & a^3 & a^4 & \cdots\\
			a^2 & a^3 & a^4 & a^5 & \cdots\\
			\vdots & \ddots
		\end{array}\right] = (1-|a|^2) \left[\begin{array}{c} 1 \\ a\\ a^2\\a^3 \\ \vdots \end{array}\right]
		\left[\begin{array}{c} 1 \\ a\\ a^2\\a^3 \\ \vdots \end{array}\right]^T\,.
	\]
Let $\zeta_a = \sqrt{1-|a|^2} \left[\begin{array}{cccccc} 1 & a & a^2 & a^3 & \cdots \end{array}\right]^T$.  Note that $\zeta_a \in \ell_2$ with norm equal to $1$.  Also note that we have
\begin{equation}\label{eq:zeta-dot}
	\langle \zeta_a, \zeta_b \rangle =  \frac{\sqrt{1-|a|^2}\sqrt{1-|b|^2}}{1-\bar{a}b}\,.
\end{equation}
Then we have
\begin{subequations}
\begin{align}
\nonumber	\|\Gamma_{\varphi_a}-\Gamma_{\varphi_b}\|_1 &= \| \zeta_a \zeta_a^T - \zeta_b \zeta_b^T\|_1\\
	& =  \| \zeta_a (\zeta_a-\zeta_b)^T +  (\zeta_a-\zeta_b) \zeta_b^T\|_1\\
	\label{eq:tri-ineq} &\leq \| \zeta_a (\zeta_a-\zeta_b)^T \|_1 + \| (\zeta_a-\zeta_b) \zeta_b^T\|_1\\
	\label{eq:nuc-2-l2} &= 2 \| \zeta_a-\zeta_b\|_{\ell_2}\\
\label{eq:l2-dist-formula}	&=2 \sqrt{2}\sqrt{1 - \mathfrak{Re} \frac{\sqrt{1-|a|^2}\sqrt{1-|b|^2}}{1-\bar{a}b}}\\
	\nonumber&\leq \frac{2 \rho}{1-\rho} |a-b|
\end{align}
\end{subequations}
Here,~\eq{tri-ineq}  is the triangle inequality.~\eq{nuc-2-l2} follows because the nuclear norm of a rank one operator is equal to the product of the $\ell_2$ norm of the factors.~\eq{l2-dist-formula} follows from~\eq{zeta-dot}.  The final inequality follows from analyzing the taylor series of the preceding expression.
\end{proof}

\section{Dual norms}
We record here a few basic properties about dual atomic norms that we need for our proofs (see~\cite{CRPW10,BhaskarAllerton11} for more details).  For an atomic set $\cA$, the dual norm is given by
\[
	\| z \|_{\cA}^* = \sup_{a \in \cA} \langle a, z \rangle\,.
\] 
Note that for this norm, we have the generalization of H\"{o}lder's inequality $\langle x,z\rangle \leq \|x\|_{\cA}\|z\|_{\cA}^*$.  Moreover, note that we have the chain of inequalities
\[
	 \alpha \|x\|_{\cA'} \leq \|x\|_{\cA} \leq \beta\|x\|_{\cA'}
\]
for some $\alpha\leq 1$ and $\beta\geq 1$ for all $x$ if and only if
\[
	 \beta^{-1} \|z\|_{\cA'}^* \leq \|z\|_{\cA}^* \leq \alpha^{-1}\|z\|_{\cA'}^*
\]
for all $z$.  

\section{Proof of Proposition~\ref{prop:grid}} First note that for any atomic sets $\cA \subset \cA'$, $\|x\|_{\cA'} \leq \|x\|_{\cA}$.  The harder part of this proposition is the lower bound.  To proceed, we use the dual norm. 

Let $\D_\rho^{(\tau)}$ be a subset of $\D_\rho$ such that for every $a \in \D_\rho$, there exists an $\hat{a}\in \D_\rho^{(\tau)}$ satisfying $|a-\hat{a}|\leq \tau$.  For each $a \in \D_\rho$, we will actually denote $\hat{a}$ as the closest point in $\D_\rho^{(\tau)}$ to $a$.

Now observe that 
\begin{align*}
	  \| \cL(\varphi_{\hat{a}}-\varphi_a)\|_{\cL(\cA)} \leq  \| \varphi_{\hat{a}}-\varphi_a\|_{\cA}
	  \leq \frac{8}{\pi} \|\Gamma_{\varphi_{\hat{a}}}- \Gamma_{\varphi_a}\|_1 \leq \frac{16 \rho \tau}{\pi (1-\rho)}\,.
\end{align*}
Here, the first inequality follows from our reasoning in Section~\ref{sec:computation}.  The second inequality is Theorem~\ref{thm:hankel-nuclearity}, and the final inequality is by~\eq{hankel-op-bound}.

We then can compute
\begin{align*}
\|z\|_{\cL(\cA)}^* &= \sup_{a \in \D_\rho} \langle \cL(\varphi_a),z \rangle\\
&= \sup_{a \in \D_\rho} \langle \cL(\varphi_{\hat{a}}), z \rangle + \langle \cL(\varphi_{a}-\varphi_{\hat{a}}), z\rangle \\
&\leq\sup_{a \in \D_\rho^{(\tau)}} \langle \cL(\varphi_{\hat{a}}), z \rangle +\sup_{a \in \D_\rho}\langle \cL(\varphi_a-\varphi_{\hat{a}}), z\rangle \\
&= \|z\|_{\cL(\cA_\tau)}^*+ \sup_{a \in \D_\rho}\langle \cL(\varphi_a-\varphi_{\hat{a}}), z\rangle \\
&\leq \|z\|_{\cL(\cA_\tau)}^*+ \sup_{a \in \D_\rho}  \| \cL(\varphi_{\hat{a}}-\varphi_a)\|_{\cL(\cA)} \|z\|_{\cL(\cA)}^*\\
&\leq \|z\|_{\cL(\cA_\tau)}^*+ \frac{16\rho\tau}{\pi(1-\rho)} \|z\|_{\cL(\cA)}^*\,.
\end{align*}
Rearranging both sides of this inequality gives
\[
	\|z\|_{\cL(\cA)}^* \leq C_\tau^{-1} \|z\|_{\cL(\cA_\tau)}^*
\]
with $C_\tau =  1-\tfrac{16\rho\tau}{\pi(1-\rho)}$, completing the proof.

\section{Optimality conditions for DAST}
The following two important inequalities were proven in~\cite[Section 2]{BhaskarAllerton11}:
\begin{theorem}\label{thm:badri}
Let $\mathcal{Q}\subset \R^n$ be an arbitrary set of atoms.  Suppose that we observe $y = x_\star+\omega$ where $\omega\sim \mathcal{N}(0,\sigma^2 I)$.  Let $\hat{x}$ denote the optimal solution of
\[
	\minimize_x \tfrac{1}{2}\|x-y\|_2^2 + \mu \|x\|_\mathcal{Q}
\]
with  $\mu \geq\|\omega\|_{\mathcal{Q}}^*$.  Then we have
\begin{align}
	 \|\hat{x}-x_\star\|_2^2 &\leq 2 \mu \|x_\star\|_{\mathcal{Q}}\\
	 \|\hat{x}\|_{\mathcal{Q}} &\leq \|x_\star\|_{\mathcal{Q}} + \mu^{-1} \langle \omega, \hat{x}-x_\star\rangle\,.
\end{align}
\end{theorem}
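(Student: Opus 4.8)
\medskip
\noindent\textbf{Proof plan.} The plan is to combine the strong convexity of the DAST objective with the first-order optimality condition for a norm-regularized least-squares fit. Write $h = \hat{x}-x_\star$ and $f(x) = \tfrac12\|x-y\|_2^2 + \mu\|x\|_{\mathcal{Q}}$. Since the quadratic part makes $f$ \emph{$1$-strongly convex} and $\hat{x}$ is its minimizer, $0 \in \partial f(\hat{x})$, so $f(x_\star) \ge f(\hat{x}) + \tfrac12\|h\|_2^2$. Substituting $y = x_\star+\omega$ and expanding $\|\hat{x}-y\|_2^2 = \|h\|_2^2 - 2\langle h,\omega\rangle + \|\omega\|_2^2$, the two copies of $\tfrac12\|\omega\|_2^2$ cancel, and rearranging leaves the single master inequality
\[
	\|h\|_2^2 \;\le\; \mu\|x_\star\|_{\mathcal{Q}} - \mu\|\hat{x}\|_{\mathcal{Q}} + \langle \omega, h\rangle\,.
\]
Equivalently, one may derive this straight from the optimality condition $y-\hat{x}\in\mu\,\partial\|\hat{x}\|_{\mathcal{Q}}$, which gives $\langle y-\hat{x},\hat{x}\rangle = \mu\|\hat{x}\|_{\mathcal{Q}}$ and $\langle y-\hat{x},x_\star\rangle \le \mu\|x_\star\|_{\mathcal{Q}}$, and then splitting $\|h\|_2^2 = \langle \hat{x}-y,h\rangle + \langle\omega,h\rangle$.

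Given the master inequality, the second stated bound is immediate: since $\|h\|_2^2\ge 0$, discarding it and dividing by $\mu$ yields $\|\hat{x}\|_{\mathcal{Q}} \le \|x_\star\|_{\mathcal{Q}} + \mu^{-1}\langle\omega,\hat{x}-x_\star\rangle$ (this half does not even use the hypothesis $\mu\ge\|\omega\|_{\mathcal{Q}}^*$). For the first bound I would control the cross term by the generalized H\"older inequality $\langle\omega,h\rangle \le \|\omega\|_{\mathcal{Q}}^*\,\|h\|_{\mathcal{Q}}$, then the triangle inequality $\|h\|_{\mathcal{Q}} \le \|\hat{x}\|_{\mathcal{Q}} + \|x_\star\|_{\mathcal{Q}}$, and finally the hypothesis $\|\omega\|_{\mathcal{Q}}^* \le \mu$, giving $\langle\omega,h\rangle \le \mu\|\hat{x}\|_{\mathcal{Q}} + \mu\|x_\star\|_{\mathcal{Q}}$. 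Substituting into the master inequality, the $\mu\|\hat{x}\|_{\mathcal{Q}}$ terms cancel and $\|h\|_2^2 \le 2\mu\|x_\star\|_{\mathcal{Q}}$ drops out, which is the claim.

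The computation is routine, so the only point that needs attention is obtaining the sharp constant $2$ (rather than $4$) in the first inequality, and this is precisely where $1$-strong convexity of $f$ is used: the weaker estimate $f(\hat{x})\le f(x_\star)$ only produces $\tfrac12\|h\|_2^2$ on the left of the master inequality and hence a factor $4$; retaining the extra $\tfrac12\|h\|_2^2$ supplied by strong convexity (equivalently, using the optimality condition in the displayed form above) upgrades this to $\|h\|_2^2$. A final sanity check would dispose of the trivial edge cases $\|x_\star\|_{\mathcal{Q}}=\infty$ and $\mu=0$.
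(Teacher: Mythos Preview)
The paper does not actually prove this theorem: it states it and attributes the proof to \cite[Section~2]{BhaskarAllerton11}. Your argument is correct and is exactly the standard one used there---derive the ``master inequality'' $\|h\|_2^2 \le \mu\|x_\star\|_{\mathcal{Q}} - \mu\|\hat{x}\|_{\mathcal{Q}} + \langle \omega,h\rangle$ from either $1$-strong convexity of the objective or from the subgradient optimality condition, then read off the second bound by dropping $\|h\|_2^2\ge 0$, and obtain the first by bounding $\langle\omega,h\rangle \le \|\omega\|_{\mathcal{Q}}^*\|h\|_{\mathcal{Q}} \le \mu(\|\hat{x}\|_{\mathcal{Q}}+\|x_\star\|_{\mathcal{Q}})$ so that the $\mu\|\hat{x}\|_{\mathcal{Q}}$ terms cancel.
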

We will use these inequalities in the following proof

\section{Proof of Theorem~\ref{thm:estimation}} 
To upper bound the $\cH_2$ norm, let us use some properties of functions which admit atomic decompositions.  Note that for any function, $H(z) = \sum_{w \in \D} c_w \varphi_w(z)$ with $\|H\|_{\cA}=\sum_{w\in\D} |c_w|$, we have, for any $z_1,z_2\in\bbS$,
\begin{subequations}
\begin{align}
\nonumber	|H(z_1)|^2-|H(z_2)|^2 &= 
	(|H(z_1)|-|H(z_2)|)(|H(z_1)|+|H(z_2)|) \\
\nonumber	&\leq 2 |H(z_1)-H(z_2)| \|H\|_{\cH_{\infty}} \\
\label{eq:tri-ineq2}	&\leq 2 \left( \sum_{w \in \D} |c_w| |\varphi_w(z_1)-\varphi_w(z_2)|\right) \left(\sum_{w\in \D} |c_w| \|\varphi_w\|_{\cH{\infty}}\right)\\
\label{eq:applylemmas}	&\leq 4 \frac{1+\rho}{1-\rho}  \left( \sum_{w \in \D} |c_w|\right)^2 |\theta_1-\theta_2|\\
\nonumber	 &= 4 \frac{1+\rho}{1-\rho}  \|H\|_{\cA}^2 |\theta_1-\theta_2|\,.
\end{align}
\end{subequations}
Here,~\eq{tri-ineq2} is the triangle inequality and~\eq{applylemmas} uses Lemmas~\ref{lemma:atom-hinf} and~\ref{lemma:atom-lipschitz}.

Let $\Delta = G_\star-\hat{G}$ and $\theta_k= \frac{2 \pi k}{n}$.  Then we can bound the norm of $\Delta$ as
\begin{align*}
\|\Delta\|_{\cH_2}^2 &= \frac{1}{2\pi} \int_0^{2\pi} |\Delta(e^{i\theta})|^2 d\theta\\
&= \frac{1}{2\pi} \sum_{k=0}^{n-1} \int_{\theta_k}^{\theta_{k+1}} |\Delta(e^{i\theta})|^2 d\theta\\
&\leq \frac{1}{2\pi} \sum_{k=0}^{n-1} \int_{\theta_k}^{\theta_{k+1}} \left(|\Delta(e^{i\theta_k})|^2 + 4 \frac{1+\rho}{1-\rho} \|\Delta\|_{\cA}^2 |\theta-\theta_k| \right)d\theta\\
&= \frac{1}{n}\sum_{k=0}^{n-1} |\Delta(e^{i\theta_k})|^2 + \frac{4\pi}{n} \frac{1+\rho}{1-\rho} \|\Delta\|_{\cA}^2\,.
\end{align*}
The inequality here follows from our preceding argument. 

Now, in this expression, we need to both upper bound the size of $\Delta$ on the measured frequencies and its atomic norm.  We will bound the latter in terms of the former:
\begin{subequations}
\begin{align}
\label{eq:tri-ineq3}	\|\Delta\|_{\cA} &\leq \| G_\star\|_{\cA}+\|\hat{G}\|_{\cA}\\
\label{eq:Ghat-def}	& = \|G_\star\|_{\cA} + \|\cL(\hat{G})\|_{\cL(\cA_\epsilon)}\\
\label{eq:apply-dast-cond}	& \leq \|G_\star\|_{\cA} + \|\cL(G_\star)\|_{\cL(\cA_\epsilon)} + \mu^{-1} \langle \omega, \cL(\hat{G}-G_{\star})\rangle \\
\label{eq:apply-grid-prop}	& \leq \|G_\star\|_{\cA} + (1-\delta)^{-1}\| \cL(G_\star)\|_{\cL(\cA)} + \mu^{-1} \langle \omega, \cL(\hat{G}-G_{\star})\rangle \\
\label{eq:inf-domination}		& \leq \frac{2-\delta}{1-\delta} \|G_\star\|_{\cA}  + \mu^{-1} \langle \omega, \cL(\hat{G}-G_{\star})\rangle \\
\label{eq:holder-mf}		& \leq \frac{2-\delta}{1-\delta} \|G_\star\|_{\cA}  + \mu^{-1} \| \omega\|_2 \left(\sum_{k=1}^{n-1} |\Delta(e^{i\theta_k})|^2\right)^{1/2} \,.
\end{align}
\end{subequations}
\eq{tri-ineq3} is the triangle inequality. \eq{Ghat-def} follows from how we defined $\hat{G}$.  \eq{apply-dast-cond} follows from Theorem~\ref{thm:badri}.  \eq{apply-grid-prop} follows from Proposition~\ref{prop:grid}.  \eq{inf-domination} follows because $\|\cL(H)\|_{\cL(\cA)} \leq \|H\|_{\cA}$ for all linear maps $\cL$ and transfer functions $H$.  \eq{holder-mf} is H\"{older}'s inequality.  Note that the quantity $\|\cL(G_\star)\|_{\cL(\cA_\epsilon)}$ could be infinite if the set $\{\cL(\phi_a)~:~a\in \D_{\rho}^{(\epsilon)}\}$ does not span $\R^n$.  This is precisely what leads to us including this assumption in the theorem statement.

To bound the size of $\Delta$ on the frequency grid, we use Theorem~\ref{thm:badri}:
\begin{align*}
	\frac{1}{n}\sum_{k=1}^{n-1} |\Delta(e^{i\theta_k})|^2  &\leq \frac{2\mu}{n} \|\cL(G_\star)\|_{\cL(\cA_\epsilon)}
	\leq \frac{2\mu}{n} (1-\delta)^{-1}\|G_\star\|_{\cA}	\,.
\end{align*}

Let $\omega \sim \mathcal{N}(0,\sigma^2 I_n)$. Using the well known upper bound for maximum of Gaussian variables (see, for example \cite{lr76}), we have
\begin{align*}
\E[\|\omega\|_{\cL(\cA_\epsilon)}^*] &=	\E\left[ \sup_{a\in\D_\rho^{(\epsilon)}} \langle \cL(\varphi_a), \omega \rangle\right] \leq \sigma \left(\sup_{a\in \D_\rho}\|\cL(\varphi_a)\|_2\right) \sqrt{2 \log |\D_\rho^{(\epsilon)}|}\,.
\end{align*}
Now, $\|\cL(\varphi_a)\|_2 \leq \sqrt{n} \|\varphi_a\|_{\cH_\infty}\leq 2 \sqrt{n}$.  Moreover, by a simple volume argument, $|\D_\rho^{(\epsilon)}| \leq \frac{1024 \rho^4}{\pi^2(1-\rho)^2\delta^2}$.  To see this, suppose $\mathcal{S}$ is a maximal set of points on $\D_\rho$ which are separated by at least $\tau$.  The maximal size of such a set is at most  $\frac{4\rho^2}{\tau^2}$. Moreover, $|\mathcal{S}|\geq |\D_\rho^{(\delta)}|$.  Now set $\tau = \epsilon=\tfrac{\pi (1-\rho) \delta}{16\rho}$.  In particular, note that we have $\E[\|\omega\|^*_{\cL(\cA_\epsilon)}] =\tfrac{1}{2} \mu$.

Now we can put all of the ingredients together.
\begin{align*}
\|\Delta\|_{\cH_2}^2  &\leq \left(1+\frac{8\pi \|\omega\|_2^2}{\mu^2}\frac{1+\rho}{1-\rho} \right)\frac{1}{n}\sum_{k=1}^{n-1} |\Delta(e^{i\theta_k})|^2\\
&\qquad + \frac{16\pi}{n(1-\delta)^2} \frac{1+\rho}{1-\rho}  \|G_\star\|_{\cA}^2\\
&  \leq \left(1+2\pi \frac{1+\rho}{1-\rho} \right) \frac{8\sigma}{1-\delta} \sqrt{\frac{2\log(\frac{1024 \rho^4}{\pi^2(1-\rho)^2\delta^2})}{n}}\|G_\star\|_{\cA} \\
&\qquad+ \frac{16\pi}{n(1-\delta)^2} \frac{1+\rho}{1-\rho}  \|G_\star\|_{\cA}^2\\
&  \leq  59 \frac{1+\rho}{1-\rho} \left( \sqrt{4\sigma^2 \log\left(\frac{11 \rho^2}{(1-\rho)\delta}\right)}\sqrt{\frac{\|G_\star\|_{\cA}^2}{n(1-\delta)^2} } + \frac{\|G_\star\|_{\cA}^2}{n(1-\delta)^2}   \right)
\end{align*}
as desired.

Applying the inequality $\|G_\star\|_{\cA}\leq \tfrac{8}{\pi}\|\Gamma_{G_\star}\|_1$ completes the proof.

\end{document}